\numberwithin{equation}{section}
\DeclareMathOperator{\tr}{Tr}
\DeclareMathOperator{\red}{red}
\DeclareMathOperator{\ev}{ev}
\DeclareMathOperator{\id}{id}
\let\liminf\relax
\DeclareMathOperator*{\liminf}{liminf}
\newcommand{\norm}[1]{\left\Vert #1\right\Vert}
\newtheorem{theorem}{Theorem} [section]
\newtheorem{prop}[theorem]{Proposition} 
\newtheorem{defi}[theorem]{Definition} 
\newtheorem{lemma}[theorem]{Lemma} 
\newtheorem{rem}[theorem]{Remark}
\newcommand\N{\mathbb{N}}
\newcommand\1{\mathbf{1}}
\newcommand\C{\mathbb{C}}
\newcommand\CC{\mathcal{C}}
\newcommand\cstar{\mathcal{C}^*}
\newcommand\R{\mathbb{R}}
\newcommand\A{\mathcal{A}}
\newcommand\F{\mathbb{F}}
\newcommand{\deq}{\mathrel{\mathop:}=}
\begin{document}

\begin{minipage}{0.85\textwidth}
	\vspace{2.5cm}
\end{minipage}
\begin{center}
	\Large\bf A Haagerup inequality through the use of orthogonal polynomials
	
\end{center}

\renewcommand{\thefootnote}{\fnsymbol{footnote}}	
\vspace{0.8cm}

\begin{center}
	F\'elix Parraud\\
	\footnotesize 
	{Queen's university}\\
	{\it felix.parraud@gmail.com}
\end{center}

\bigskip
\bigskip

\begin{abstract}

In this paper we prove that the Haagerup inequality for non-homogeneous polynomials in free semicircular variables of degree $n$ is optimal with a constant of order $n^{3/2}$. We also show an operator valued Haagerup inequality which improves on existing results. Our main tool to do so are free Chebyshev polynomials also known as $0$-Hermite polynomials.
	
\end{abstract}

\section{Introduction}

The notion of Haagerup inequality was first invented by Haagerup himself who discovered that the operator norm of the elements of a certain $\cstar$-algebra could be controlled by its $L^2$-norm up to a surprisingly small constant. More precisely, let $\cstar_{\red}(\mathbb{F}_d)$ be the reduced $\cstar$-algebra of $\F_d$ the free group with $d$ generators. He proved in \cite{haagineq} that for any $n\in\N$,
$$ \norm{\sum_{|g|=n} \alpha_g \lambda(g)}_{\cstar_{\red}(\mathbb{F}_d)} \leq (n+1) \norm{\sum_{|g|=n} \alpha_g \lambda(g)}_{L^2(\F_d)}, $$
where $\alpha_g\in\C$, $\lambda(g)$ is a unitary operator associated to $g$ and $|g|$ is the reduced length of the word $g$. Given that for $d\geq 2$, the number of words of such length grows exponentially with $n$, one would not expect the ratio of the $L^{\infty}$-norm over the $L^2$-norm to be bounded by a constant growing linearly in $n$. Using this inequality, Haagerup proved that $\cstar_{\red}(\mathbb{F}_d)$ has the metric approximation property. 

Since the original paper of Haagerup, these kind of inequalities have been found to be a lot more common than one would expect it at first. Groups whose reduced group $\cstar$-algebra satisfies those inequalities are said to have the rapid decay property. We refer to \cite{chat} for a history of the problem. Besides, this property turned out to have numerous application in different fields, such as operator algebras, noncommutative harmonic analysis, and geometric group theory. See for example \cite{e1,e2,e3,e31,haagineq,e4,e5,e6}, or more recently \cite{f3} for some applications to $q$-Gaussians, and \cite{f1,f2} for a broad range of application to the theory of $\CC^*$-algebras.

This paper focuses on the case of free semicircular variables. A Haagerup inequality for this model was first proved in \cite{bo} before being proved again in \cite{bs} as a corollary. This result was then extended to the case of $q$-Gaussians in \cite{boq} and \cite{krolak}. In particular, those results all have in common that the ratio of the operator norm and the $L^2$-norm of a homogeneous polynomial in free semicircular variables of degree $n$ is bounded by $n+1$ which is known to be optimal since this inequality is an equality for a Chebyshev polynomial of the second kind of degree $n$. On the contrary, for non-homogeneous polynomial, at best one can deduce that the ratio of the operator norm and the $L^2$-norm can be upper bounded by a quantity of order $n^{3/2}$. In the following theorem we prove that this is in fact optimal up to a constant independent of $n$.

\begin{theorem}
	\label{main1}
	
	There exists a family of orthogonal projections $(P_n)_{n\geq 0}$ such that for any given $z\in L^2(x)$, if $\sum_{n\geq 0} (n+1) \norm{P_nz}_2$ is finite, then $z\in\CC^*(x)$, and 
	\begin{equation}
		\label{skjdvnslknvd}
		\norm{z} \leq \sum_{n\geq 0} (n+1) \norm{P_nz}_2.
	\end{equation}
	In particular, $P_0+\dots+P_n$ is the orthogonal projection on the set of polynomials of degree at most $n$. Hence for any such polynomials $P$,
	\begin{equation}
		\label{svlknsldn}
		\norm{P(x)} \leq \sqrt{\frac{1}{3}\Big(n+1\Big)\left(n+\frac{3}{2}\right)\Big(n+2\Big)} \norm{P(x)}_2.
	\end{equation}
	Besides one can find polynomials $Q_n$ of degree $n$ such that 
	\begin{equation}
		\label{lskdnvclsnv}
		\liminf_{n\to\infty} \frac{\norm{Q_n(x)}}{\sqrt{\frac{1}{3}\Big(n+1\Big)\left(n+\frac{3}{2}\right)\Big(n+2\Big)} \norm{Q_n(x)}_2} \geq \sqrt{\frac{3}{8}} .
	\end{equation}
\end{theorem}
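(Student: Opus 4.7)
The plan is to define, for each $n\geq 0$, the orthogonal projection $P_n$ of $L^2(x,\tau)$ onto the linear span of the free Chebyshev (equivalently, $0$-Hermite) polynomials of degree exactly $n$. Since these polynomials are pairwise orthogonal across degrees and since the polynomials of degree at most $n$ are exactly spanned by the free Chebyshev polynomials of degree at most $n$, the telescopic sum $P_0+\cdots+P_n$ is automatically the orthogonal projection onto the subspace of polynomials of degree at most $n$, as stated. The key input is the homogeneous Haagerup inequality for free semicircular systems from \cite{bo,bs}, which asserts that any $z$ in the range of $P_n$ satisfies $\|z\|\leq(n+1)\|z\|_2$.

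From this, the inequality \eqref{skjdvnslknvd} is obtained by approximating $z$ with its partial sums $z_N=\sum_{n=0}^{N}P_nz$. These are polynomials in $x$ and therefore lie in $\CC^*(x)$; by the triangle inequality combined with the homogeneous estimate,
$$\|z_N-z_M\|\leq\sum_{n=M+1}^{N}(n+1)\|P_nz\|_2,$$
so whenever $\sum_{n}(n+1)\|P_nz\|_2<\infty$ the sequence $(z_N)$ is Cauchy in operator norm and its $C^*$-limit must coincide with $z$ by uniqueness of the $L^2$ limit, yielding both $z\in\CC^*(x)$ and the estimate \eqref{skjdvnslknvd}. To deduce \eqref{svlknsldn}, note that for a polynomial $P$ of degree at most $n$ one has $P_kP=0$ for $k>n$, so Cauchy--Schwarz applied to $\sum_{k=0}^{n}(k+1)\|P_kP\|_2$ gives
$$\|P(x)\|\leq\sqrt{\textstyle\sum_{k=0}^n(k+1)^2}\,\|P(x)\|_2,$$
and the elementary identity $\sum_{k=0}^n(k+1)^2=\frac{(n+1)(n+2)(2n+3)}{6}=\frac{1}{3}(n+1)(n+3/2)(n+2)$ produces exactly the constant of \eqref{svlknsldn}.

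For the optimality \eqref{lskdnvclsnv}, a natural candidate that simultaneously saturates the triangle and Cauchy--Schwarz steps is
$$Q_n(y)=\sum_{k=0}^{n}(k+1)\,U_k(y),$$
where $U_k$ denotes the $k$-th Chebyshev polynomial of the second kind, normalized so that $\{U_k(x_1)\}_k$ is orthonormal in $L^2$ for one fixed generator $x_1$ of the semicircular family. Orthonormality gives $\|Q_n(x)\|_2=\sqrt{\sum_{k=0}^n(k+1)^2}$, while the spectrum of $x_1$ contains $2$ and $U_k(2)=k+1$, so
$$\|Q_n(x)\|\geq Q_n(2)=\sum_{k=0}^n(k+1)^2.$$
Dividing by the upper-bound constant and by $\|Q_n(x)\|_2$, the normalized ratio in \eqref{lskdnvclsnv} is at least $1$, which comfortably exceeds $\sqrt{3/8}$.

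The step that needs the most care is the first one: establishing precisely that the range of $P_n$, defined as the orthogonal complement in polynomials of degree at most $n$ of those of degree at most $n-1$, coincides with the linear span of degree-$n$ free Chebyshev polynomials, so that the homogeneous Haagerup inequality of \cite{bo,bs} genuinely applies to every element of $\mathrm{Ran}(P_n)$ rather than only to Wick monomials. Once this identification is in place, the rest of the argument is a short combination of triangle inequality, Cauchy--Schwarz and elementary algebra.
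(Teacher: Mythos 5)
Your proof is correct, but it diverges from the paper's in two substantive ways. First, you import the homogeneous estimate $\|z\|\leq (n+1)\|z\|_2$ for $z\in\mathrm{Ran}(P_n)$ from \cite{bo,bs}, whereas the paper reproves it from scratch: Lemmas \ref{skjdvnsknc} and \ref{sldncslms} derive it from the product formula for the free Chebyshev polynomials (Proposition \ref{djkfnvslnc}), mimicking Haagerup's original convolution argument. Your shortcut is legitimate --- the Bo\.zejko/Biane--Speicher inequality holds for arbitrary elements of the $n$-th homogeneous Wigner chaos, and the identification of that chaos with $\mathrm{Ran}(P_n)$, which you rightly flag as the delicate point, is exactly the orthonormal-basis statement of Proposition \ref{ksdncslmdlv} --- but it makes the argument non-self-contained and bypasses what is essentially the paper's main technical content. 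The approximation argument for \eqref{skjdvnslknvd} (Cauchy sequence of partial sums, identification of the $C^*$- and $L^2$-limits) and the Cauchy--Schwarz step yielding \eqref{svlknsldn} are then identical to the paper's. Second, and more interestingly, your extremal example for \eqref{lskdnvclsnv} is different and strictly stronger. The paper takes $Q_{2n}=U_n(x_1)^2$, computes $\|Q_{2n}\|=(n+1)^2$ and $\|Q_{2n}\|_2=\sqrt{n+1}$ via Proposition \ref{djkfnvslnc}, and obtains the liminf $\sqrt{3/8}$. Your choice $Q_n=\sum_{k=0}^n(k+1)U_k(x_1)$ saturates simultaneously the triangle inequality (each $U_k$ attains its sup $k+1$ at $t=2\in\mathrm{spec}(x_1)$, all with the same sign) and Cauchy--Schwarz (the coefficients are proportional to $k+1$), so that $\|Q_n(x)\|=Q_n(2)=\sum_{k=0}^n(k+1)^2$ while $\|Q_n(x)\|_2=\bigl(\sum_{k=0}^n(k+1)^2\bigr)^{1/2}$; the normalized ratio in \eqref{lskdnvclsnv} therefore equals $1$ for every $n$, showing that \eqref{svlknsldn} is in fact an equality for these polynomials. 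This is a sharper conclusion than the paper's lower bound of $\sqrt{3/8}$, and it comfortably implies \eqref{lskdnvclsnv}.
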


Note that in the case of homogeneous polynomials in circular variables, and more generally in $R$-diagonal elements, one can replace the quantity $n+1$ in Equation \eqref{skjdvncslkdm} by $\sqrt{n}$ up to a constant independent of $n$. This was first done in \cite{KS}. However since in this case one can only consider polynomials in circular variables $c_1,\dots,c_d$ but not in $c_1,\dots,c_d,c_1^*,\dots,c_d^*$, one cannot use this theorem to improve Theorem \ref{main1}. Note that this result was improved to families of operators satisfying weaker assumptions in \cite{brannan} and to the case of $q$-circular variables in \cite{KM}. 

Another direction that Theorem \ref{main1} can be improved into is by considering polynomials with operator coefficients instead of scalar coefficients. In the self-adjoint case, this was mainly investigated in \cite{delas} where the author proved among other things that given self-adjoint free copies $(x_k)_{1\leq k \leq d}$ of an operator $x$ with a symmetric compactly supported spectral measure, then for some constant $K(x)$ independent of every parameter,
\begin{equation}
	\label{skjdvnlasn}
	\norm{\sum_{i_1,\dots,i_n\in\N} a_{i_1,\dots,i_n}\otimes x_{i_1} \dots x_{i_d}} \leq K(x) (n+1) \max_{0\leq l\leq n} \norm{M_l},
\end{equation}
where $a_{i_1,\dots,i_n}$ are matrices such that $a_{i_1,\dots,i_n}=0$ as soon as $i_s=i_{s+1}$ for some $s$, and $M_l$ are matrices built out of those coefficients. Note in particular that one can take $x$ to be a semicircular variable. In this case however we prove the following theorem.

\begin{theorem}
	\label{main2}
	
	Given $H$ a Hilbert space, $a_{i_n,\dots,i_1}\in B(H)$ the space of bounded linear operator on $H$, one has that	
	$$ \norm{\sum_{i_n,\dots,i_1\in [1,d]} a_{i_n,\dots,i_1}\otimes P_{i_n,\dots,i_1}(x)} \leq \sum_{j=0}^n \norm{ \left(a_{(I,J)}\right)_{I\in [1,d]^{n-l}, J\in [1,d]^{l}}}, $$
	where $\left(a_{(I,J)}\right)_{I\in [1,d]^{n-l}, J\in [1,d]^{l}}$ is a matrix going from $H^{ d^l}$ to $H^{ d^{n-l}}$ with coefficients in $B(H)$, and $P_{i_n,\dots,i_1}$ is as in Definition \ref{skjdvncslkdm} (see also Proposition \ref{sjdvnslndv} for a more explicit equivalent definition).
	
\end{theorem}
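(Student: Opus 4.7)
The plan is to exploit the Fock space representation. I would realize the semicircular family as $x_i = \ell_i + \ell_i^*$, where $\ell_1, \dots, \ell_d$ are the free creation operators on the full Fock space $\mathcal{F} = \bigoplus_{m \geq 0}(\C^d)^{\otimes m}$. The defining property of $P_I$ is that $P_{i_n,\dots,i_1}(x)\Omega = e_{i_n} \otimes \cdots \otimes e_{i_1}$, and the first step is to expand $P_I(x)$ in the Wick-type form
\begin{equation*}
P_{i_n, \dots, i_1}(x) \;=\; \sum_{l=0}^n \ell_{i_n} \cdots \ell_{i_{l+1}} \, \ell_{i_l}^* \cdots \ell_{i_1}^*,
\end{equation*}
which is the free analogue of the normal-ordered product; the absence of cross-terms of the form $\ell^*\ell$ reflects the Fock-space identity $\ell_j^*\ell_k = \delta_{jk}$. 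This formula should follow either by induction from Definition \ref{skjdvncslkdm} or directly from the alternate description in Proposition \ref{sjdvnslndv}.

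Substituting into $T := \sum_I a_I \otimes P_I(x)$ and regrouping by the ``annihilation depth'' $l$ gives $T = \sum_{l=0}^n S_l$, where
\begin{equation*}
S_l \;=\; \sum_{\substack{I \in [1,d]^{n-l} \\ J \in [1,d]^l}} a_{(I,J)} \otimes \ell_{I}\,\ell_{J}^*,
\end{equation*}
with $\ell_I := \ell_{i_{n}} \cdots \ell_{i_{l+1}}$ and $\ell_J^* := \ell_{j_l}^* \cdots \ell_{j_1}^*$. The triangle inequality then reduces the theorem to showing $\norm{S_l} = \norm{A_l}$, where $A_l := (a_{(I,J)})_{I,J}$ is the $B(H)$-valued matrix appearing in the statement.

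For this last step I would restrict $S_l$ to each tensor level $H \otimes (\C^d)^{\otimes m}$. For $m < l$ the operator vanishes, so assume $m \geq l$. Identify $(\C^d)^{\otimes m} \cong (\C^d)^{\otimes l} \otimes (\C^d)^{\otimes m-l}$, the first factor being the ``topmost'' tensors reachable by $\ell_J^*$, and similarly $(\C^d)^{\otimes m+n-2l} \cong (\C^d)^{\otimes n-l} \otimes (\C^d)^{\otimes m-l}$. Under these identifications $\ell_J^*$ becomes the linear functional $\langle e_J, \cdot \rangle$ on the first factor (modulo a prescribed reversal of index ordering), $\ell_I$ is insertion of $e_I$, and the remaining $(\C^d)^{\otimes m-l}$ factor is untouched, so
\begin{equation*}
S_l\big|_{H \otimes (\C^d)^{\otimes m}} \;\simeq\; A_l \otimes \mathrm{id}_{(\C^d)^{\otimes m-l}}.
\end{equation*}
Hence $\norm{S_l|_m} = \norm{A_l}$ for every $m \geq l$, and since $S_l$ sends distinct input levels $m$ to distinct output levels $m+n-2l$, these blocks are pairwise orthogonal in source and target, so $\norm{S_l} = \norm{A_l}$ with no additional factor. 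Summing over $l$ yields the inequality.

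The main technical difficulty is really just bookkeeping in the third paragraph: making the identifications precise enough that the matrix one reads off $S_l$ is exactly $A_l$ (and not, say, a partial transpose thereof) requires carefully tracking tensor orderings. The core analytic content --- that each piece $S_l$ has operator norm equal to a matrix norm with no $n$-dependent factor --- is essentially automatic from orthogonality of Fock levels, which is the specifically ``free'' ingredient (a classical commutative argument would incur strictly worse constants, as reflected in the existing bound \eqref{skjdvnlasn} from \cite{delas}).
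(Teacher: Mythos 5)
Your proof is correct and follows essentially the same route as the paper: the Wick expansion $P_{i_n,\dots,i_1}(x)=\sum_{l=0}^n\ell_{i_n}\cdots\ell_{i_{l+1}}\ell_{i_l}^*\cdots\ell_{i_1}^*$ is exactly Proposition \ref{dnvls} read through the canonical unitary $L^2(x)\cong\mathcal{F}$ sending $P_{k_m,\dots,k_1}(x)$ to $e_{k_m}\otimes\cdots\otimes e_{k_1}$ (under which the paper's isometries $U_I$ become words in the $\ell_i$), and the rest is the same layer-by-layer triangle inequality. The only divergence is in proving $\norm{S_l}=\norm{A_l}$: the paper (Proposition \ref{sljkdv}) derives it algebraically from $\norm{X}^2=\norm{XX^*}$ and the relation $U_J^*U_T=\1_{J=T}\,\id$, whereas you block-diagonalize over Fock levels and identify each block with $A_l\otimes\mathrm{id}$ --- an equivalent argument, and your reversal-of-ordering caveat is indeed harmless since the reversal is implemented by a unitary permutation of the index set and so does not change the norm.
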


Note that in the case of semicircular variables, this improve upon Equation \eqref{skjdvnlasn} in two directions. First, one does not need to assume that $i_s\neq i_{s+1}$. Indeed since the family $ \mathcal{F} = (P_{i_m,\dots,i_1}(x))_{i_1,\dots,i_m\in [1,d], 1\leq m\leq n} $ is an orthonormal basis of the set of polynomials in $x$ of degree smaller or equal to $n$, given a (non-necessarily homogeneous) polynomial with operator coefficients, one can decompose every monomial in the orthonormal basis $\mathcal{F}$ and then use Theorem \ref{main2}. In particular, note that if for all $s$, $i_s\neq i_{s+1}$ then $P_{i_n,\dots,i_1}(x) = x_{i_n} \dots x_{i_1}$.

Secondly, since the matrices $M_l$ are precisely the matrices $\left(a_{(I,J)}\right)_{I\in [1,d]^{n-l}, J\in [1,d]^{l}}$, upper bounding by the sum of their norm is naturally sharper than upper bounding by $n+1$ times the maximum of their norm. In fact, if the coefficients are scalar, then since
$$ \norm{ \left(a_{(I,J)}\right)_{I\in [1,d]^{n-l}, J\in [1,d]^{l}}} \leq \sqrt{\tr_N\left( \left|\left(a_{(I,J)}\right)_{I\in [1,d]^{n-l}, J\in [1,d]^{l}}\right|^2 \right)} = \norm{\sum_{i_n,\dots,i_1\in [1,d]} a_{i_n,\dots,i_1} P_{i_n,\dots,i_1}(x)}_2,$$
Theorem \ref{main2} yields an even sharper bound than Theorem \ref{main1}. For a concrete example, we refer to the discussion following Theorem 0.3 of \cite{delas}.

Finally, note that Theorem \ref{main2} can be viewed as an operator-valued non-commutative Khintchine inequality. Indeed, in the case of the free group $\F_d$, it was proved in \cite{HP} that
$$ \norm{\sum_{i=1}^{d} a_i\otimes \lambda(g_i) } \leq 2 \max\left(\norm{\sum_{i=1}^{d} a_i^*a_i},\norm{\sum_{i=1}^{d} a_ia_i^*}\right).$$
See also \cite{CMi} for the case of a free semicircular system. Hence Theorem \ref{main2} proves a similar upper bound but for homogeneous polynomials of any degree instead of only $1$.

\section{Preliminaries}

In order to be self-contained, we begin by recalling the following definitions  from free probability.

\begin{defi}~
	\label{3freeprob}
	\begin{itemize}
		\item A linear map $\tau : \A \to \C$ defined on a unital $\mathcal{C}^*$-algebra $(\A,*,\norm{.})$ is said to be a state if it satisfies $\tau(1_{\A})=1$ and $\tau(a^*a)\geq 0$ for all $a\in \A$. Besides, $\tau$ is said to be a trace, if $\tau(ab) = \tau(ba) $ for any $a,b\in\A$. A trace is said to be faithful if $\tau(a^*a)=0$ if and only if $a=0$.
		
		\item  Let $\A_1,\dots,\A_n$ be $*$-subalgebras of $\A$, having the same unit as $\A$. They are said to be free if for all $k$, for all $a_i\in\A_{j_i}$ such that $j_1\neq j_2$, $j_2\neq j_3$, \dots , $j_{k-1}\neq j_k$:
		\begin{equation}
			\label{kddkdxkfl}
			\tau\Big( (a_1-\tau(a_1))(a_2-\tau(a_2))\dots (a_k-\tau(a_k)) \Big) = 0.
		\end{equation}
		Families of operators are said to be free if the $*$-subalgebras they generate are free.
		
		\item A $d$-tuple of operators $ x=(x_1,\dots ,x_d)$ is called a free semicircular system if they are free, self-adjoint ($x_i=x_i^*$), and for all $k$ in $\N$ and $i$, one has
		\begin{equation*}
			\tau( x_i^k) =  \int_{\R} t^k d\sigma(t),
		\end{equation*}
		with $d\sigma(t) = \frac 1 {2\pi} \sqrt{4-t^2} \ \mathbf 1_{|t|\leq2} \ dt$ the semicircle distribution. The full Fock space gives us an explicit construction for a $\CC^*$-algebra endowed with a faithful trace which contains a system of free semicircular variables. For a full introduction to the topic, we refer to Chapter 7 of \cite{nica_speicher_2006}.
		
		\item We denote by $\CC^*(x)$ the $\mathcal{C}^*$-algebra generated by the free semicircular system $x$. Note that for $a,b\in\CC^*(x)$, the map $\langle x,y \rangle \deq \tau(xy^*)$ is a scalar product. We denote by $L^2(x)$ the completion of $\CC^*(x)$ for this scalar product and denote by $\norm{\cdot}_2$ the norm that it induces.
		
	\end{itemize}
	
\end{defi}

Let us finally fix a few notations concerning the spaces and traces that we use in this paper.

\begin{defi} \
	\begin{itemize}
		\item $\C\langle X_1,\dots,X_d\rangle$ is the space of non-commutative polynomials in $d$ variables.
		\item $\1$ will be the unit of the $*$-algebra $\C\langle X_1,\dots,X_d\rangle$.
	\end{itemize}
\end{defi}

Let us now define the so-called non-commutative derivative. This notion was introduced long ago, and is widely used in free probability, see, e.g., \cite{v98} for the inception of the concept. 
\begin{defi}
	\label{1application}
	
	If $1\leq i\leq d$, one defines the non-commutative derivative $\partial_i: \C\langle X_1,\dots,X_d\rangle \longrightarrow \C\langle X_1,\dots,X_d\rangle^{\otimes 2}$ on a monomial  $M$ by
	$$ \partial_i M = \sum_{M=AX_iB} A\otimes B ,$$
	and then extend it by linearity to all polynomials. 
		
\end{defi}

Those operators are related to the Schwinger-Dyson equation for a semicircular system thanks to the following Property \ref{1SDE}. One can find a proof in \cite{alice}, Lemma 5.4.7.

\begin{prop}
	\label{1SDE}
	Let $ x=(x_1,\dots ,x_p)$ be a free semicircular system whose $\mathcal{C}^*$-algebra they generate is endowed with a trace $\tau$. Then for any $P\in \C\langle X_1,\dots,X_d\rangle$,
	$$ \tau(P(x)\ x_i) = \tau\otimes\tau(\partial_i P(x))\ .$$	
\end{prop}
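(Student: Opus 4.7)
The plan is to reduce by linearity to a single monomial and then expand the moment on the left-hand side via the standard Wick-type pair-partition formula for free semicircular variables; the two sides then match term by term.

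By linearity of $\tau$ and $\partial_i$, it suffices to prove the identity for a monomial $M = X_{j_1}\cdots X_{j_n}$. By definition of $\partial_i$,
$$ \partial_i M = \sum_{k\,:\,j_k=i} X_{j_1}\cdots X_{j_{k-1}} \otimes X_{j_{k+1}}\cdots X_{j_n}, $$
so
$$ (\tau\otimes\tau)(\partial_i M(x)) = \sum_{k\,:\,j_k=i} \tau(x_{j_1}\cdots x_{j_{k-1}})\,\tau(x_{j_{k+1}}\cdots x_{j_n}), $$
with the convention that the empty product has trace $1$. The goal is therefore to prove
$$ \tau(x_{j_1}\cdots x_{j_n}x_i) = \sum_{k\,:\,j_k=i} \tau(x_{j_1}\cdots x_{j_{k-1}})\,\tau(x_{j_{k+1}}\cdots x_{j_n}). $$

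Next, I would invoke the moment formula for a free semicircular system: since the free cumulants satisfy $\kappa_2(x_a,x_b) = \delta_{a,b}$ and $\kappa_m \equiv 0$ for $m\neq 2$, the moment–cumulant relation gives
$$ \tau(x_{i_1}\cdots x_{i_m}) = \#\bigl\{ \pi \in NC_2(m) \,:\, i_a = i_b \text{ whenever } \{a,b\}\in\pi \bigr\}, $$
where $NC_2(m)$ is the set of non-crossing pair partitions of $\{1,\dots,m\}$. Applying this to the word $(j_1,\dots,j_n,i)$ of length $n+1$, I partition the sum according to the partner $k\in\{1,\dots,n\}$ of the last position $n+1$. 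The non-crossing condition forces the remaining pairs to lie entirely inside $\{1,\dots,k-1\}$ or entirely inside $\{k+1,\dots,n\}$, so the count factorises; moreover a nonzero contribution requires $j_k = i$. Grouping the sub-partitions yields exactly
$$ \tau(x_{j_1}\cdots x_{j_n}x_i) = \sum_{k\,:\,j_k=i} \tau(x_{j_1}\cdots x_{j_{k-1}})\,\tau(x_{j_{k+1}}\cdots x_{j_n}), $$
which matches the previous display and finishes the proof.

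An alternative, entirely self-contained route is induction on $n$: using only $\tau(x_i)=0$, $\tau(x_i x_j)=\delta_{i,j}$, and the defining centred-alternating freeness relation \eqref{kddkdxkfl} to systematically extract factors of $\tau(x_a x_b)$ from $\tau(x_{j_1}\cdots x_{j_n}x_i)$ and reduce to lower-degree moments. The main (minor) obstacle in either route is purely combinatorial bookkeeping, namely tracking how the partner of the appended $x_i$ splits the remaining word into the two independent pieces whose moments factor; this is exactly mirrored by the sum over factorisations $M = A X_i B$ built into the definition of $\partial_i$.
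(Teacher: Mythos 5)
Your proof is correct. Note that the paper does not actually prove Proposition \ref{1SDE}: it simply cites Lemma 5.4.7 of Anderson--Guionnet--Zeitouni, so there is no in-paper argument to compare against. Your route --- reduce to a monomial by linearity, then apply the free Wick formula $\tau(x_{i_1}\cdots x_{i_m}) = \#\{\pi\in NC_2(m) : i_a=i_b \text{ for all } \{a,b\}\in\pi\}$ and split the non-crossing pairings of $(j_1,\dots,j_n,i)$ according to the partner $k$ of the last letter --- is a standard and complete derivation: the pair $\{k,n+1\}$ forces $j_k=i$ and, by the non-crossing constraint, isolates $\{1,\dots,k-1\}$ from $\{k+1,\dots,n\}$, so the count factorises into the product of the two sub-moments, which is exactly the $k$-th term of $\tau\otimes\tau(\partial_i M(x))$. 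The only dependency you import is the moment--cumulant formula with $\kappa_2(x_a,x_b)=\delta_{a,b}$ and all other cumulants vanishing, which is itself a standard characterisation of a free semicircular system; if one wanted an argument using only Definition \ref{3freeprob}, the inductive route you sketch at the end would need to be carried out in detail, but as written the main argument is sound.
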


Finally let us conclude this section with the following proposition which states that free semicircular variables are algebraically free. For a proof, we refer to Proposition 2.4 of \cite{maispeichcomp}.

\begin{prop}
	\label{freealg}
	Given $x$ a free semicircular system, then for any polynomials $P\in \C\langle X_1,\dots,X_d\rangle$, if $P(x)=0$, then $P=0$.
\end{prop}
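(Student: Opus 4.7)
The plan is to use the Fock space realization of a free semicircular system, on which the evaluation map $P\mapsto P(x)\Omega$ is manifestly injective; since the trace $\tau$ is faithful, the hypothesis $P(x)=0$ is equivalent to $\tau((PP^*)(x))=0$, a moment condition depending only on the joint $*$-distribution of $x$, which is uniquely determined by freeness and the semicircle moments. Thus it suffices to check injectivity in a single convenient realization.

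Concretely, on the full Fock space $\mathcal{F}(\C^d)=\bigoplus_{n\geq 0}(\C^d)^{\otimes n}$ with orthonormal basis $\{e_I = e_{i_1}\otimes\cdots\otimes e_{i_n} : I=(i_1,\ldots,i_n)\in[1,d]^n,\ n\geq 0\}$ (the empty tensor being the vacuum $\Omega$), let $\ell_i$ denote the left creation operator and $\ell_i^*$ its adjoint. Then $x_i = \ell_i + \ell_i^*$ yields a free semicircular system with faithful vacuum trace $\tau(\cdot) = \langle\cdot\,\Omega,\Omega\rangle$, and the key observation is the upper-triangular identity
$$ x_{i_1}\cdots x_{i_n}\,\Omega \;=\; e_I \;+\; r_I, \qquad r_I \in \bigoplus_{k<n}(\C^d)^{\otimes k}, $$
valid for every word $I=(i_1,\ldots,i_n)$. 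This is immediate from expanding each factor as $\ell_{i_k}+\ell_{i_k}^*$ and acting on $\Omega$: the only term landing in top Fock degree $n$ uses every creation operator and produces exactly $e_I$, while any expansion involving some $\ell_{i_k}^*$ strictly lowers the Fock degree because annihilation removes (or kills) the leftmost slot.

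Given this, I would proceed by induction on $\deg P$. The constant case is immediate. For the inductive step, assume the statement for polynomials of degree strictly less than $n$, and take $P$ with $\deg P\leq n$ and $P(x)=0$. Writing $P = \sum_{|I|\leq n} c_I\, X_{i_1}\cdots X_{i_{|I|}}$, we have $P(x)\Omega = 0$; projecting onto $(\C^d)^{\otimes n}$ and applying the upper-triangular identity yields $\sum_{|I|=n} c_I\, e_I = 0$, so by orthonormality of the Fock basis $c_I = 0$ for every $I$ of length $n$. Thus $P$ is actually of degree strictly less than $n$, and the inductive hypothesis finishes the proof.

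No step is particularly delicate: everything is concentrated in the upper-triangular identity, which itself reduces to $\ell_i^*\Omega=0$ and $\ell_i^*\ell_j=\delta_{ij}\,\mathrm{id}$. In fact this argument is the algebraic shadow of the existence of orthogonal polynomials $P_{i_1,\ldots,i_n}$ satisfying $P_{i_1,\ldots,i_n}(x)\Omega = e_{i_1}\otimes\cdots\otimes e_{i_n}$, which is precisely the family that will be used throughout the rest of the paper.
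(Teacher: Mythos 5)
Your argument is correct. The paper does not prove this proposition at all: it simply cites Proposition 2.4 of Mai--Speicher--Weber, which is a much more general statement (absence of algebraic relations for any tuple of maximal non-microstates free entropy dimension), so you have replaced a black-box reference by a short self-contained proof specific to semicirculars. The two steps you use are both sound: (i) the reduction to the Fock realization is legitimate because $\tau\big(P(x)^*P(x)\big)$ is a universal polynomial in the mixed moments of $x$, which are determined by freeness together with the semicircle law, and faithfulness of $\tau$ on $\CC^*(x)$ (implicit in the paper, since $\tau(xy^*)$ is asserted to be a scalar product) converts $P(x)=0$ into the vanishing of that moment, which then forces $P(x)\Omega=0$ on the Fock space since $\|P(x)\Omega\|^2=\tau\big(P(x)^*P(x)\big)$; (ii) the triangularity $x_{i_1}\cdots x_{i_n}\Omega=e_{i_1}\otimes\cdots\otimes e_{i_n}+(\text{terms of Fock degree}\le n-2)$ follows exactly as you say from $\ell_i^*\Omega=0$ and $\ell_i^*\ell_j=\delta_{ij}\,\mathrm{id}$, and the downward induction on the degree then extracts $c_I=0$ for $|I|=n$ from the top Fock component. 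As you note, this is the same triangularity that the paper exploits later when it observes that $P_{i_n,\dots,i_1}=X_{i_n}\cdots X_{i_1}+Q$ with $\deg Q<n$ in the proof of Proposition \ref{ksdncslmdlv}, so your proof fits naturally into the paper's framework; the only thing the citation buys over your argument is generality beyond the semicircular case, which is not needed here.
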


\section{Definition and properties of the free Chebyshev polynomials}

Chebyshev polynomials are a well-known tool of approximation theory that were first introduced by Chebyshev himself in 1853. There exist several equivalent definitions of those polynomials. However our approach will be different from the usual one. We begin by defining a higher order non-commutative derivative.

\begin{defi}
	\label{sdvjnskdvn}
	We define for $i_1,i_2,\dots,i_n\in [1,d]$, and $M\in\C\langle X_1,\dots,X_d\rangle$ a monomial,
	$$ \partial_{i_1,i_2,\dots,i_n} M = \sum_{M=A_1 X_{i_1} A_2\dots A_n X_{i_n} A_{n+1}} A_1 \otimes A_2\otimes\dots\otimes A_n \otimes A_{n+1}. $$
	Then we extend $\partial_{i_1,i_2,\dots,i_n}$ to $\C\langle X_1,\dots,X_d\rangle$ by linearity.
\end{defi}

Next we define our free Chebyshev polynomials through the following induction formula. By explicit computations one can check that in the classical case (i.e. when $i_1=\cdots=i_n=j$ for a given $j\in[1,d]$) those polynomials are simply Chebyshev polynomials of the second kind evaluated in $X_j$ and re-scaled to be orthonormal with respect to the semicircle law with support on $[-2,2]$. It is worth noting that the induction formula given by this definition does not seem to match with the usual one in the classical case, this difference will be removed in Proposition \ref{sjdvnslndv}.

\begin{defi}
	\label{skjdvncslkdm}
	We define by induction $P_{\emptyset}=\1$ and
	$$ P_{i_n,\dots,i_1} = X_{i_n} P_{i_{n-1},\dots,i_1}  - \ev\otimes\id\left( \partial_{i_n} P_{i_{n-1},\dots,i_1} \right),$$
	where for any given polynomial $Q$ we set $\ev(Q) \deq \tau(Q(x))$ with $x$ a $d$-tuple of free semicircular variables and $\tau$ the trace on the $\CC^*$-algebra they generate.
\end{defi}

The reason for this definition is the following proposition that states that our free Chebyshev polynomials are actually the adjoints of the higher order derivatives of Definition \ref{sdvjnskdvn} evaluated in $\1^{\otimes n+1}$.

\begin{prop}
	\label{kdsjnskdnvs}
	Given $x$ a $d$-tuple of free semicircular variables and $\tau$ the trace on the $\CC^*$-algebra they generate, then for any polynomials $Q$,
	$$ \tau\left( P_{i_n,\dots,i_1}(x) Q(x) \right) = \tau^{\otimes n+1} \left(\partial_{i_1,i_2,\dots,i_n}Q(x)\right). $$
	Thus $P_{i_n,\dots,i_1}(x) = (\partial_{i_1,i_2,\dots,i_n})^*(\1^{\otimes n+1})$.
\end{prop}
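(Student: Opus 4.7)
The natural approach is to prove the first identity by induction on the length $n$ of the index tuple and then to read off the adjoint statement as a reformulation in terms of the $L^2$ inner product on $L^2(x)$.

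For the base case $n=0$, we have $P_{\emptyset}=\1$ and $\partial_{\emptyset}=\id$, so both sides collapse to $\tau(Q(x))$. For the inductive step, I would unfold the defining recurrence of Definition~\ref{skjdvncslkdm} to write
\[ \tau\bigl(P_{i_n,\dots,i_1}(x)\,Q(x)\bigr) = \tau\bigl(x_{i_n}\,P_{i_{n-1},\dots,i_1}(x)\,Q(x)\bigr) - \tau\bigl((\ev\otimes\id)(\partial_{i_n}P_{i_{n-1},\dots,i_1})(x)\cdot Q(x)\bigr), \]
use the trace property to move $x_{i_n}$ to the right in the first summand, and then apply the Schwinger--Dyson equation (Proposition~\ref{1SDE}) with $R=P_{i_{n-1},\dots,i_1}Q$. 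Expanding $\partial_{i_n}(RQ)$ via the standard Leibniz rule
\[ \partial_{i_n}(P_{i_{n-1},\dots,i_1}\,Q) = \partial_{i_n}(P_{i_{n-1},\dots,i_1})\cdot(1\otimes Q) + (P_{i_{n-1},\dots,i_1}\otimes 1)\cdot \partial_{i_n}(Q), \]
I would observe that, after applying $\tau\otimes\tau$, the first summand equals exactly $\tau\bigl((\ev\otimes\id)(\partial_{i_n}P_{i_{n-1},\dots,i_1})(x)\cdot Q(x)\bigr)$. This is the whole point of the recurrence in Definition~\ref{skjdvncslkdm}, and this built-in cancellation is the one bookkeeping step that has to be carried out carefully.

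What survives is $\tau\otimes\tau\bigl((P_{i_{n-1},\dots,i_1}\otimes 1)\cdot \partial_{i_n}(Q)\bigr)$. Writing $\partial_{i_n}(Q)=\sum_k C_k\otimes D_k$, this equals $\sum_k \tau\bigl(P_{i_{n-1},\dots,i_1}(x)\,C_k(x)\bigr)\,\tau(D_k(x))$. Applying the inductive hypothesis to each $C_k$ converts every first factor into $\tau^{\otimes n}\bigl(\partial_{i_1,\dots,i_{n-1}}(C_k)(x)\bigr)$, whence the whole expression rewrites as $\tau^{\otimes(n+1)}\bigl((\partial_{i_1,\dots,i_{n-1}}\otimes \id)(\partial_{i_n}(Q))(x)\bigr)$. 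The induction then closes once one checks, directly on a monomial by comparing the two indexings, the compatibility identity
\[ \partial_{i_1,\dots,i_n} = (\partial_{i_1,\dots,i_{n-1}}\otimes \id)\circ \partial_{i_n}. \]

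The second assertion, $P_{i_n,\dots,i_1}(x)=(\partial_{i_1,\dots,i_n})^*(\1^{\otimes(n+1)})$, is then a pure reformulation. Endowing $L^2(x)$ with $\langle a,b\rangle = \tau(ab^*)$ and $L^2(x)^{\otimes(n+1)}$ with the induced tensor product, one has $\langle T,\1^{\otimes(n+1)}\rangle = \tau^{\otimes(n+1)}(T)$ for any $T$; combining this with the identity just proved, pairing against an arbitrary polynomial $Q$ characterises $(\partial_{i_1,\dots,i_n})^*(\1^{\otimes(n+1)})$ uniquely as the element that represents the linear functional $Q\mapsto \tau^{\otimes(n+1)}(\partial_{i_1,\dots,i_n}(Q)(x))$, which is exactly $P_{i_n,\dots,i_1}(x)$. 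The main obstacle is therefore nothing conceptual; it is simply to execute the Leibniz cancellation and the composition identity cleanly, and the rest is linear-algebraic bookkeeping.
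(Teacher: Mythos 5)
Your proposal is correct and follows essentially the same route as the paper: induction on $n$, traciality plus the Schwinger--Dyson equation applied to $P_{i_{n-1},\dots,i_1}Q$, the Leibniz expansion whose first term cancels the $\ev\otimes\id$ correction from Definition~\ref{skjdvncslkdm}, and the induction hypothesis applied to the surviving term. You merely make explicit the composition identity $\partial_{i_1,\dots,i_n}=(\partial_{i_1,\dots,i_{n-1}}\otimes\id)\circ\partial_{i_n}$ that the paper uses implicitly in its last line.
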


\begin{proof}
	
	Let us proceed by induction. The case where $n=1$ is simply the Schwinger-Dyson equations, i.e. Proposition \ref{1SDE}. If $n>1$, thanks once again to Proposition \ref{1SDE}, one has
	\begin{align*}
		\tau\left(x_{i_n} P_{i_{n-1},\dots,i_1}(x) Q(x) \right) &= \tau\otimes\tau\left(\partial_{i_n}(P_{i_{n-1},\dots,i_1}Q)(x) \right) \\
		&= \tau\otimes\tau\left( \partial_{i_n} P_{i_{n-1},\dots,i_1}(x) \ \1\otimes Q(x)  \right) + \tau\otimes\tau\left(P_{i_{n-1},\dots,i_1}(x)\otimes\1 \ \partial_{i_n}Q(x)  \right) \\
		&= \tau\left( \ev\otimes\id\left(\partial_{i_n} P_{i_{n-1},\dots,i_1}(x)\right) Q(x)  \right) + \tau\otimes\tau\left(P_{i_{n-1},\dots,i_1}(x)\otimes\1 \ \partial_{i_n}Q(x)  \right) .
	\end{align*}
	Consequently, one has that
	\begin{align*}
		\tau\left( P_{i_n,\dots,i_1}(x) Q(x) \right) &= \tau\otimes\tau\left(P_{i_{n-1},\dots,i_1}(x)\otimes\1 \ \partial_{i_n}Q(x)  \right) \\
		&= \tau^{\otimes n+1} \left(\partial_{i_1,i_2,\dots,i_n}Q(x)\right),
	\end{align*}
	where we used our induction hypothesis in the last line.
	
\end{proof}

The next proposition states that the adjoint of a free Chebyshev polynomial is also a Chebyshev polynomial.

\begin{prop}
	\label{skdjvns}
		For any $i_1,i_2,\dots,i_n\in [1,d]$, one has
		$$ P_{i_n,\dots,i_1}^* = P_{i_1,\dots,i_n}. $$
\end{prop}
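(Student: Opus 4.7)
My plan is to deduce Proposition \ref{skdjvns} from the characterization of the free Chebyshev polynomials provided by Proposition \ref{kdsjnskdnvs}. Since the evaluation map $P\mapsto P(x)$ is injective on $\C\langle X_1,\dots,X_d\rangle$ by Proposition \ref{freealg}, and since $\tau$ is a faithful trace, any polynomial $R$ is determined by the values $\tau(R(x)Q(x))$ as $Q$ ranges over $\C\langle X_1,\dots,X_d\rangle$ (if $R_1,R_2$ give the same values, take $Q=(R_1-R_2)^*$ and apply faithfulness together with Proposition \ref{freealg}). Therefore, to prove $P_{i_1,\dots,i_n}^* = P_{i_n,\dots,i_1}$ it suffices to verify that
$$ \tau\left(P_{i_1,\dots,i_n}^*(x)\,Q(x)\right) = \tau^{\otimes n+1}\left(\partial_{i_1,\dots,i_n}Q(x)\right) \qquad \text{for every } Q. $$

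To reshape the left-hand side I use $\overline{\tau(T)}=\tau(T^*)$, the trace property, and $(P(x))^*=P^*(x)$ (each $x_i$ being self-adjoint):
\begin{align*}
\tau\left(P_{i_1,\dots,i_n}^*(x)\,Q(x)\right) &= \overline{\tau\left(Q^*(x)\,P_{i_1,\dots,i_n}(x)\right)} = \overline{\tau\left(P_{i_1,\dots,i_n}(x)\,Q^*(x)\right)}\\
&= \overline{\tau^{\otimes n+1}\left(\partial_{i_n,\dots,i_1}Q^*(x)\right)},
\end{align*}
where the last equality is Proposition \ref{kdsjnskdnvs} applied to $P_{i_1,\dots,i_n}$ (whose associated derivative indices appear in the reverse order $i_n,\dots,i_1$). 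The proposition therefore reduces to the combinatorial identity
$$ \overline{\tau^{\otimes n+1}\left(\partial_{i_n,\dots,i_1}Q^*(x)\right)} = \tau^{\otimes n+1}\left(\partial_{i_1,\dots,i_n}Q(x)\right) \qquad \text{for every } Q. $$

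By linearity it is enough to check this when $Q=M$ is a monomial. The involution provides a bijection $M = A_1 X_{i_1} A_2 \cdots A_n X_{i_n} A_{n+1}\ \longleftrightarrow\ M^* = A_{n+1}^* X_{i_n} A_n^* \cdots A_2^* X_{i_1} A_1^*$ between the terms $A_1\otimes\cdots\otimes A_{n+1}$ of $\partial_{i_1,\dots,i_n}M$ and the terms $A_{n+1}^*\otimes\cdots\otimes A_1^*$ of $\partial_{i_n,\dots,i_1}M^*$. Applying $\tau^{\otimes n+1}$ and using $\overline{\tau(A(x))}=\tau(A^*(x))$ on each factor matches the two sums term-by-term.

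The main (minor) obstacle is keeping track of this bijection cleanly; once set up, the rest is immediate from the characterization in Proposition \ref{kdsjnskdnvs} and the tracial properties of $\tau$. An alternative direct induction on $n$ is available but would require first deriving a companion right-handed recursion $P_{i_n,\dots,i_1} = P_{i_n,\dots,i_2}X_{i_1} - \id\otimes\ev\left(\partial_{i_1}P_{i_n,\dots,i_2}\right)$, itself most naturally obtained through Proposition \ref{kdsjnskdnvs}, so the dualisation route above seems the most economical.
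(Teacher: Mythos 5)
Your proposal is correct and follows essentially the same route as the paper: both reduce the claim to the identity $\overline{\tau^{\otimes n+1}\left(\partial_{i_n,\dots,i_1}M^*(x)\right)} = \tau^{\otimes n+1}\left(\partial_{i_1,\dots,i_n}M(x)\right)$ for monomials via the conjugation/trace trick and Proposition \ref{kdsjnskdnvs}, establish it through the bijection $M=A_1X_{i_1}\cdots X_{i_n}A_{n+1}\leftrightarrow M^*=A_{n+1}^*X_{i_n}\cdots X_{i_1}A_1^*$, and conclude by linearity, faithfulness of $\tau$, and Proposition \ref{freealg}.
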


\begin{proof}
	For any monomial $M$, thanks to Proposition \ref{kdsjnskdnvs} one has 
	\begin{align*}
		\tau\left(P_{i_n,\dots,i_1}^*(x) M(x)\right) &= \overline{\tau\left(P_{i_n,\dots,i_1}(x) M^*(x)\right)} \\
		&= \overline{ \tau^{\otimes n+1} \left(\partial_{i_1,i_2,\dots,i_n}M^*(x)\right) } \\
		&=  \sum_{M^*=A_1 X_{i_1} A_2\dots A_n X_{i_n} A_{n+1}} \overline{\tau(A_1(x))} \dots \overline{\tau(A_{n+1}(x))} \\
		&=  \sum_{M= A_{n+1}^* X_{i_n} A_n^*\dots A_2^* X_{i_1} A_1^* } \tau(A_{n+1}^*(x)) \dots \tau(A_1^*(x)) \\
		&= \tau^{\otimes n+1} \left(\partial_{i_n,i_{n-1},\dots,i_1}M(x)\right) \\
		&= \tau\left(P_{i_1,\dots,i_n}(x) M(x)\right).
	\end{align*}
	Consequently, by linearity, for any polynomial $Q$, 
	$$ \tau\left(\left(P_{i_n,\dots,i_1}^*(x) - P_{i_1,\dots,i_n}(x)\right) Q(x)\right)=0.$$
	Thus by orthogonality $P_{i_n,\dots,i_1}^*(x) = P_{i_1,\dots,i_n}(x)$, hence the conclusion thanks to Proposition \ref{freealg}.
\end{proof}

It is well-known that Chebyshev polynomials form an orthonormal basis for the $L^2$-space associated with the semicircle distribution on $[-2,2]$. It turns out that our definition of free Chebyshev polynomials generalizes this property to $L^2(x)$.

\begin{prop}
	\label{ksdncslmdlv}
	The family $(P_{i_n,\dots,i_1}(x))_{i_1,\dots,i_n\in [1,d], n\in\N}$ is an orthonormal basis of $L^2(x)$ where $x$ is a $d$-tuple of free semicircular variables.
\end{prop}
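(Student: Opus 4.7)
The plan is to first establish orthonormality, then deduce the basis property by a dimension count combined with the standard density of polynomials in $L^2(x)$. The only genuinely new ingredient is a degree observation on the polynomials $P_{i_n,\dots,i_1}$; everything else is then forced by Propositions \ref{kdsjnskdnvs}, \ref{skdjvns} and \ref{freealg}.

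First I would show by induction on $n$ that $P_{i_n,\dots,i_1}$ has degree exactly $n$ and its top-degree homogeneous component equals the single monomial $X_{i_n} X_{i_{n-1}} \cdots X_{i_1}$, with all remaining terms of degree at most $n-2$. This is immediate from Definition \ref{skjdvncslkdm}: the term $X_{i_n} P_{i_{n-1},\dots,i_1}$ contributes the leading monomial and, by induction, only lower-order terms of degree at most $n-1$, while $\ev\otimes\id(\partial_{i_n}P_{i_{n-1},\dots,i_1})$ has degree at most $n-2$ because $\partial_{i_n}$ strictly decreases degree and $\ev$ produces scalars.

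Next I would establish orthonormality. For two tuples $(i_n,\dots,i_1)$ and $(j_m,\dots,j_1)$, Proposition \ref{skdjvns} gives $P^*_{j_m,\dots,j_1}=P_{j_1,\dots,j_m}$, and Proposition \ref{kdsjnskdnvs} yields
$$ \tau\!\left(P_{i_n,\dots,i_1}(x)\, P^*_{j_m,\dots,j_1}(x)\right) \;=\; \tau^{\otimes n+1}\!\left(\partial_{i_1,\dots,i_n} P_{j_1,\dots,j_m}(x)\right). $$
When $n>m$ the right-hand side vanishes, since $\partial_{i_1,\dots,i_n}$ kills every polynomial of degree strictly less than $n$ (any decomposition $A_1 X_{i_1}\cdots A_n X_{i_n} A_{n+1}$ would require at least $n$ variables). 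The case $n<m$ follows by conjugate symmetry of the inner product. When $n=m$, the degree observation from the previous paragraph implies that $\partial_{i_1,\dots,i_n}$ can only act on the leading monomial $X_{j_1}\cdots X_{j_n}$ of $P_{j_1,\dots,j_n}$, and
$$ \partial_{i_1,\dots,i_n}(X_{j_1}\cdots X_{j_n}) \;=\; \delta_{i_1,j_1}\cdots\delta_{i_n,j_n}\, \1^{\otimes n+1}, $$
whence the inner product equals $\prod_k \delta_{i_k,j_k}$, as required.

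Finally I would prove that the family is a basis. The $\sum_{k=0}^{N}d^k$ monomials of degree at most $N$ evaluated at $x$ are linearly independent by Proposition \ref{freealg}. By the leading-term property of step~1, the change of basis from the monomials of degree $\leq N$ to the polynomials $(P_{i_n,\dots,i_1})_{n\leq N}$ is triangular with respect to degree, with identity on each top layer, hence invertible. Therefore $\mathrm{span}\{P_{i_n,\dots,i_1}(x): n\leq N\}$ coincides with the space of polynomials in $x$ of degree at most $N$. Letting $N\to\infty$, the span of all free Chebyshev polynomials equals the polynomial algebra in $x$, which is dense in $\CC^*(x)$ (it is the $*$-algebra generated by the self-adjoint family $x$), and therefore dense in $L^2(x)$ since $\norm{\cdot}_2\leq \norm{\cdot}$. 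Combined with orthonormality, this completes the proof that $(P_{i_n,\dots,i_1}(x))$ is an orthonormal basis of $L^2(x)$. The only mildly subtle point is the degree-based vanishing of $\partial_{i_1,\dots,i_n}$, which I expect to handle cleanly via the decomposition count in Definition \ref{sdvjnskdvn}.
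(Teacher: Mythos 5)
Your proposal is correct and follows essentially the same route as the paper: a leading-monomial induction showing $P_{i_n,\dots,i_1}=X_{i_n}\cdots X_{i_1}+(\text{lower order})$ to get the basis/density part, and orthonormality via Propositions \ref{kdsjnskdnvs} and \ref{skdjvns} together with the fact that $\partial_{i_1,\dots,i_n}$ annihilates polynomials of degree less than $n$ and picks out only the top monomial when $n=m$. The extra details you supply (the degree-$(n-2)$ bound on the remainder and the explicit triangular change of basis) are harmless refinements of what the paper leaves implicit.
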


\begin{proof}
	By a straightforward induction, one has that  
	$$ P_{i_n,\dots,i_1} = X_{i_n}\dots X_{i_1} + Q, $$
	where $Q$ is a polynomial of degree strictly smaller than $n$. Thus $(P_{i_n,\dots,i_1})_{i_1,\dots,i_n\in [1,d], n\in\N}$ is a basis of $\C\langle X_1,\dots,X_d\rangle$. Since the set of polynomials evaluated in $x$ is dense in $L^2(x)$, there only remains to prove the orthonormality of our family. Let us assume that $n\geq m$, thanks to Proposition \ref{kdsjnskdnvs} and \ref{skdjvns}, one has
	\begin{align*}
		\tau\left( P_{i_n,\dots,i_1}(x) P_{j_m,\dots,j_1}^*(x)  \right) &= \tau^{\otimes n+1}\left( \partial_{i_1,i_2,\dots,i_n} P_{j_1,\dots,j_m}	(x)  \right) \\
		&= \tau^{\otimes n+1}\left( \partial_{i_1,i_2,\dots,i_n} (x_{j_1}\dots x_{j_m})  \right) \\
		&= \1_{n=m} \1_{i_1=j_1,\dots,i_n=j_n}.
	\end{align*}
	Hence the conclusion since if $n<m$,
	$$ \tau\left( P_{i_n,\dots,i_1}(x) P_{j_m,\dots,j_1}^*(x) \right) = \overline{ \tau\left( P_{j_m,\dots,j_1}(x) P_{i_n,\dots,i_1}^*(x) \right)} =0. $$
\end{proof}	

It is known (see for example Section 2.3 of \cite{jamieroland}) that classical Chebyshev polynomials behave surprisingly well with respect to the non-commutative differential. Indeed if $P_n$ is the $n$-th of those polynomials, then
$$ \partial_1 P_n = \sum_{l=0}^{n-1} P_l\otimes P_{n-1-l}.$$
Once again, free Chebyshev polynomials satisfy a similar formula.

\begin{prop}
	\label{sdincvsncd}
	For any $i_1,i_2,\dots,i_n\in [1,d]$, one has
	$$ \partial_k P_{i_n,\dots,i_1} = \sum_{j=1}^n \1_{i_j=k}\ P_{i_n,\dots,i_{j+1}}\otimes P_{i_{j-1},\dots,i_1}. $$
\end{prop}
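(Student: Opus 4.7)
\vspace{0.3cm}
\noindent\textbf{Proof proposal.} I would proceed by strong induction on $n$. The cases $n=0,1$ are immediate: $\partial_k P_\emptyset = 0$ matches the empty sum, and $\partial_k X_{i_1} = \1_{i_1=k}\,\1\otimes\1 = \1_{i_1=k}\,P_\emptyset\otimes P_\emptyset$.

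Before the main step I would first derive a three-term recurrence. Applying the proposition inductively with $k=i_n$ to $P_{i_{n-1},\dots,i_1}$ yields
$$\partial_{i_n} P_{i_{n-1},\dots,i_1} = \sum_{l=1}^{n-1}\1_{i_l=i_n}\,P_{i_{n-1},\dots,i_{l+1}}\otimes P_{i_{l-1},\dots,i_1}.$$
Applying $\ev\otimes\id$ and noting that $\tau(P_{i_{n-1},\dots,i_{l+1}}(x)) = \1_{l=n-1}$ by Proposition \ref{ksdncslmdlv} (since this is the inner product of $P_{i_{n-1},\dots,i_{l+1}}$ with $P_\emptyset$), the defining recurrence from Definition \ref{skjdvncslkdm} collapses to
$$P_{i_n,\dots,i_1} = X_{i_n}\,P_{i_{n-1},\dots,i_1} - \1_{i_{n-1}=i_n}\,P_{i_{n-2},\dots,i_1}.$$
The same computation applied to any non-empty subword gives, for $m\leq n-1$,
$$X_{i_n}\,P_{i_{n-1},\dots,i_m} = P_{i_n,i_{n-1},\dots,i_m} + \1_{i_{n-1}=i_n}\,P_{i_{n-2},\dots,i_m},$$
while the empty-subword case is simply $X_{i_n}\cdot\1 = P_{i_n}$, with no correction.

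Applying $\partial_k$ to the three-term recurrence and using the Leibniz rule,
$$\partial_k(X_{i_n}P_{i_{n-1},\dots,i_1}) = \1_{i_n=k}\,\1\otimes P_{i_{n-1},\dots,i_1} + (X_{i_n}\otimes\1)\,\partial_k P_{i_{n-1},\dots,i_1}.$$
The first summand is precisely the $j=n$ term of the claim. For the second, I substitute the induction hypothesis for $\partial_k P_{i_{n-1},\dots,i_1}$ and in each slot un-bend $X_{i_n}P_{i_{n-1},\dots,i_{j+1}}$ using the identity above (the slot $j=n-1$ being the empty-subword case, which contributes no remainder). This produces the desired $j\in\{1,\dots,n-1\}$ terms plus the remainder
$$\1_{i_{n-1}=i_n}\sum_{j=1}^{n-2}\1_{i_j=k}\,P_{i_{n-2},\dots,i_{j+1}}\otimes P_{i_{j-1},\dots,i_1} \;=\; \1_{i_{n-1}=i_n}\,\partial_k P_{i_{n-2},\dots,i_1},$$
the last equality being the induction hypothesis once more. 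This cancels exactly against $\partial_k$ of the subtracted term $-\1_{i_{n-1}=i_n}P_{i_{n-2},\dots,i_1}$ in the three-term recurrence, leaving the claim. The only real difficulty is the index bookkeeping; passing through the three-term recurrence first is what makes the cancellation transparent, and I expect this to be the only subtle point of the argument.
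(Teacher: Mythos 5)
Your proposal is correct and follows essentially the same route as the paper: both first collapse the defining recurrence to the three-term form $P_{i_n,\dots,i_1}=X_{i_n}P_{i_{n-1},\dots,i_1}-\1_{i_{n-1}=i_n}P_{i_{n-2},\dots,i_1}$ via orthogonality with $P_\emptyset$, then apply the Leibniz rule and the induction hypothesis, regrouping $X_{i_n}P_{i_{n-1},\dots,i_{j+1}}$ terms against the subtracted piece. The cancellation you describe is exactly the regrouping performed in the paper's final display, so no further comment is needed.
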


\begin{proof}
	
	Let us begin by induction. It is clear if $n=1$ since
	$$ \partial_kP_{i_1} = \partial_kX_{i_1} = \1_{i_1=k}\ P_{\emptyset}\otimes P_{\emptyset}. $$
	If $n>1$, then with $s\leq n$, thanks to our induction hypothesis,
	$$ P_{i_s,\dots,i_1} = X_{i_s} P_{i_{s-1},\dots,i_1}  - \sum_{l=1}^{s-1} \1_{i_l=i_s} \tau\left( P_{i_{s-1},\dots,i_{l+1}}(x)\right) P_{i_{l-1},\dots,i_1} .$$
	But thanks to Proposition \ref{ksdncslmdlv}, if $l<s-1$, then by orthogonality,
	$$\tau\left( P_{i_{s-1},\dots,i_{l+1}}(x)\right) = \tau\left( P_{i_{s-1},\dots,i_{l+1}}(x) P_{\emptyset}^*\right) = 0.$$
	Consequently,
	\begin{equation}
		\label{sdkcns}
		P_{i_s,\dots,i_1} = X_{i_s} P_{i_{s-1},\dots,i_1}  - \1_{i_{s-1}=i_s} P_{i_{s-2},\dots,i_1}.
	\end{equation}
	Hence
	\begin{align*}
		\partial_k P_{i_n,\dots,i_1} &= \1_{i_n=k}\ P_{\emptyset} \otimes P_{i_{n-1},\dots,i_1} + \sum_{j=1}^{n-1} \1_{i_j=k}\ X_{i_n}P_{i_{n-1},\dots,i_{j+1}}\otimes P_{i_{j-1},\dots,i_1} \\
		&\quad  - \1_{i_{n-1}=i_n} \sum_{ j=1 }^{n-2} \1_{i_j=k} P_{i_{n-2},\dots,i_{j+1}}\otimes P_{i_{j-1},\dots,i_1} \\
		&= \sum_{j=1}^{n-2} \1_{i_j=k} \left( X_{i_n}P_{i_{n-1},\dots,i_{j+1}} - \1_{i_{n-1}=i_n} P_{i_{n-2},\dots,i_{j+1}} \right)\otimes P_{i_{j-1},\dots,i_1} \\
		&\quad + \1_{i_n=k}\ P_{\emptyset} \otimes P_{i_{n-1},\dots,i_1} + \1_{i_{n-1} = k} P_{i_n} \otimes P_{i_{n-2},\dots,i_1} \\
		&= \sum_{j=1}^n \1_{i_j=k}\ P_{i_n,\dots,i_{j+1}}\otimes P_{i_{j-1},\dots,i_1},
	\end{align*}
	where we used Equation \eqref{sdkcns} in the last line.
	
\end{proof}

The proof above actually also yields the following result.

\begin{prop}
	\label{sjdvnslndv}
	The family of polynomials $(P_{i_n,\dots,i_1})_{n\in\N, i_j\in [1,d]} $ of Definition \ref{sdvjnskdvn} satisfies the following induction formula, for $n=0$, $P_{\emptyset}=\1$, for $n=1$, $P_{i_1}=X_{i_1}$, and for $n>1$,
	\begin{equation*}
		P_{i_n,\dots,i_1} = X_{i_n} P_{i_{s-1},\dots,i_1}  - \1_{i_{n-1}=i_n} P_{i_{s-2},\dots,i_1}.
	\end{equation*}
\end{prop}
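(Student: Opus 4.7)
The plan is essentially to extract the recursion directly from the computation already carried out in the proof of Proposition \ref{sdincvsncd}: the content of Proposition \ref{sjdvnslndv} is literally Equation \eqref{sdkcns} of that argument. The work is therefore to isolate that step, verify no circularity arises, and present it as a standalone derivation.

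Starting from Definition \ref{skjdvncslkdm}, for $n\geq 2$ the only task is to evaluate $\ev\otimes\id(\partial_{i_n} P_{i_{n-1},\dots,i_1})$. First I would apply Proposition \ref{sdincvsncd} to $P_{i_{n-1},\dots,i_1}$, expanding
$$\partial_{i_n} P_{i_{n-1},\dots,i_1} = \sum_{j=1}^{n-1} \1_{i_j=i_n}\, P_{i_{n-1},\dots,i_{j+1}}\otimes P_{i_{j-1},\dots,i_1}.$$
Applying $\ev\otimes\id$ turns each left tensorand into the scalar $\tau(P_{i_{n-1},\dots,i_{j+1}}(x))$. By the orthonormality of the free Chebyshev polynomials (Proposition \ref{ksdncslmdlv}), $\tau(P_{i_{n-1},\dots,i_{j+1}}(x)) = \langle P_{i_{n-1},\dots,i_{j+1}}(x),P_\emptyset\rangle$ vanishes except when $P_{i_{n-1},\dots,i_{j+1}} = P_\emptyset$, i.e.\ precisely when $j+1 = n$. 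The surviving term equals $\1_{i_{n-1}=i_n} P_{i_{n-2},\dots,i_1}$, and substituting this into the defining recursion yields the stated formula. The cases $n=0,1$ are immediate from the definition.

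The only thing worth double-checking is that this chain of reasoning is not circular, since Proposition \ref{sdincvsncd} was itself proved via Equation \eqref{sdkcns}. However, the inductive proof of Proposition \ref{sdincvsncd} first establishes \eqref{sdkcns} at index $n$ from the induction hypothesis on $\partial$ at indices strictly less than $n$, and only afterwards deduces the derivative formula at index $n$. So invoking Proposition \ref{sdincvsncd} for the polynomial $P_{i_{n-1},\dots,i_1}$—an object of length $n-1$—to prove the recursion at length $n$ is legitimate. There is no further obstacle; the proposition is a clean by-product of the previous one.
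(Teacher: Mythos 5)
Your argument is correct and is essentially the paper's own: the paper gives no separate proof of this proposition but simply points to Equation \eqref{sdkcns} in the proof of Proposition \ref{sdincvsncd}, which is obtained by exactly the computation you describe (expand $\ev\otimes\id(\partial_{i_n}P_{i_{n-1},\dots,i_1})$ via the derivative formula at length $n-1$ and kill all terms but $j=n-1$ by orthogonality to $P_\emptyset$). Your discussion of non-circularity is accurate, and in fact once Proposition \ref{sdincvsncd} is fully proved there is no circularity issue at all in citing it at length $n-1$.
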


This induction formula matches with the classical case. Indeed, if $P_n$ is the $n$-th Chebyshev polynomials of the second kind and re-scaled to be orthonormal with respect to the semicircle law with support on $[-2,2]$. Then it is known (see for example Section 2.3 of \cite{jamieroland}) that
$$P_n(X) = XP_n(X) - P_{n-1}(X),$$
which is the same formula as the one given by the Proposition above. In fact the induction defined in the proposition above is the one of $q$-Hermite polynomials for $q=0$, see section 2 of \cite{qher}. That being said, neither Definition \ref{skjdvncslkdm} nor Proposition \ref{sjdvnslndv} will be of much use when manipulating free Chebyshev polynomials. Instead we will mostly use Proposition \ref{ksdncslmdlv}, \ref{sdincvsncd} as well as \ref{djkfnvslnc}.

It is known (see for example Section 2.3 of \cite{jamieroland}) that multiplying classical Chebyshev polynomials is a simple operation. More precisely, one has that
$$ P_nP_m= P_{n+m} + P_{n+m-2} + \dots + P_{|n-m|}. $$
Once again this property generalizes to the free case.

\begin{prop}
	\label{djkfnvslnc}
	Given $i_1,\dots,i_n,j_1,\dots, j_m\in [1,d]$, then
	$$ P_{i_n,\dots,i_1} P_{j_m,\dots, j_1} = \sum_{s=0}^{m\wedge n} \1_{(i_1,\dots,i_s) = (j_m,\dots,j_{m-s+1})} P_{i_n,\dots,i_{s+1},j_{m-s},\dots,j_1}. $$
\end{prop}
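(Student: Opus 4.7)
My plan is to prove the formula by strong induction on $n$, keeping $m$ arbitrary, using the three-term recursion of Proposition \ref{sjdvnslndv} as the main driver. The base case $n=0$ is immediate, and $n=1$ follows directly by rearranging the recursion into the ``left multiplication'' form
$$ X_k\, P_{a_\ell,\dots,a_1} = P_{k,a_\ell,\dots,a_1} + \1_{k=a_\ell}\, P_{a_{\ell-1},\dots,a_1}, $$
which recovers the two terms $s=0$ and $s=1$ of the claimed formula.

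For the inductive step at level $n\geq 2$, I would substitute $P_{i_n,\dots,i_1} = X_{i_n} P_{i_{n-1},\dots,i_1} - \1_{i_{n-1}=i_n} P_{i_{n-2},\dots,i_1}$, multiply on the right by $P_{j_m,\dots,j_1}$, and apply the induction hypothesis to each of the two products $P_{i_{n-1},\dots,i_1} P_{j_m,\dots,j_1}$ and $P_{i_{n-2},\dots,i_1} P_{j_m,\dots,j_1}$. This writes the left-hand side as
$$ X_{i_n} \sum_{s=0}^{m\wedge(n-1)} \delta_s\, P_{i_{n-1},\dots,i_{s+1},j_{m-s},\dots,j_1} \;-\; \1_{i_{n-1}=i_n}\sum_{s=0}^{m\wedge(n-2)} \delta_s\, P_{i_{n-2},\dots,i_{s+1},j_{m-s},\dots,j_1}, $$
where $\delta_s = \1_{(i_1,\dots,i_s)=(j_m,\dots,j_{m-s+1})}$. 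Applying the left-multiplication identity above to $X_{i_n}$ times each summand of the first sum linearizes the product into pure Chebyshev terms. For indices $s\leq n-2$ the leading index of the acted polynomial is $i_{n-1}$, so the correction term is $\1_{i_n=i_{n-1}} P_{i_{n-2},\dots,i_{s+1},j_{m-s},\dots,j_1}$; summing these corrections over $s$ produces exactly the second sum, hence cancels it. The ``main'' terms in the first sum then give $\sum_{s=0}^{m\wedge(n-1)} \delta_s\, P_{i_n,\dots,i_{s+1},j_{m-s},\dots,j_1}$, which matches all but the $s=n$ term of the target formula.

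The remaining piece comes from the boundary case $s=n-1$ (possible only when $n-1\leq m$): here the acted polynomial is $P_{j_{m-n+1},\dots,j_1}$, whose leading index is $j_{m-n+1}$ rather than $i_{n-1}$, so $X_{i_n}$ contributes an extra correction $\1_{i_n=j_{m-n+1}}\, P_{j_{m-n},\dots,j_1}$ with coefficient $\delta_{n-1}$. The identity $\delta_{n-1}\cdot\1_{i_n=j_{m-n+1}} = \delta_n$ produces precisely the missing $s=n$ term (which only appears when $n\leq m$, consistent with the summation range $s\leq m\wedge n$). The main obstacle is therefore purely combinatorial: one must carefully handle the boundary indices (the distinction between $s\leq n-2$, $s=n-1$, $s=n$; and separately the regimes $n\geq m+2$, $n=m+1$, $n\leq m$) and verify that in each regime the ``degenerate'' Chebyshev polynomials appearing when an index sequence becomes empty agree with the convention $P_\emptyset = \1$, so that the sums telescope to the claimed closed form.
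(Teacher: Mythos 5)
Your proof is correct, but it follows a genuinely different route from the paper. The paper works in $\CC^*(x)$ and computes, for every basis element $P_{k_l,\dots,k_1}(x)$, the coefficient $\tau\left(P_{i_n,\dots,i_1}(x)P_{j_m,\dots,j_1}(x)P_{k_l,\dots,k_1}^*(x)\right)$ by rewriting it via Propositions \ref{kdsjnskdnvs} and \ref{skdjvns} as $\tau^{\otimes l+1}\left(\partial_{k_l,\dots,k_1}(P_{i_n,\dots,i_1}P_{j_m,\dots,j_1})(x)\right)$, reducing the whole identity to solving a small linear system in the splitting parameters $r_1,r_2,s$; it then lifts the resulting operator identity back to formal polynomials via Proposition \ref{freealg}. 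Your argument instead stays entirely at the level of formal polynomials and runs a strong induction on $n$ driven by the three-term recursion of Proposition \ref{sjdvnslndv}, rearranged as $X_k P_{a_\ell,\dots,a_1} = P_{k,a_\ell,\dots,a_1} + \1_{k=a_\ell}P_{a_{\ell-1},\dots,a_1}$. The cancellation you describe is exactly right: the corrections generated by $X_{i_n}$ acting on the terms with $s\leq n-2$ carry the factor $\1_{i_n=i_{n-1}}$ and cancel the subtracted sum, while the boundary term $s=n-1$ (present only when $n-1\leq m$) produces the missing $s=n$ summand through $\delta_{n-1}\1_{i_n=j_{m-n+1}}=\delta_n$; the degenerate case $s=n-1=m$, where the acted polynomial is $P_\emptyset$ and no correction arises, is consistent with the range $s\leq m\wedge n$. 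Your approach is more elementary (no trace, no algebraic freeness needed to descend from $\CC^*(x)$ to $\C\langle X_1,\dots,X_d\rangle$) at the cost of more delicate boundary bookkeeping; the paper's approach concentrates all the combinatorics into one inner-product computation and meshes naturally with the derivative formalism used elsewhere. Just make sure, when writing it up, that the induction hypothesis is quantified over all $m$ and all index tuples, since the step at level $n$ invokes it at levels $n-1$ and $n-2$.
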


\begin{proof}
	
	Thanks to Proposition \ref{freealg}, it is enough to show that 
	$$ P_{i_n,\dots,i_1}(x) P_{j_m,\dots, j_1}(x) = \sum_{s=0}^{m\wedge n} \1_{(i_1,\dots,i_s) = (j_m,\dots,j_{m-s+1})} P_{i_n,\dots,i_{s+1},j_{m-s},\dots,j_1}(x).$$
	Besides, thanks to Proposition \ref{kdsjnskdnvs} and \ref{skdjvns},
	\begin{align*}
		\tau\left( P_{i_n,\dots,i_1}(x) P_{j_m,\dots, j_1}(x) P_{k_l,\dots, k_1}^*(x)  \right) &= \tau\left( P_{i_n,\dots,i_1}(x) P_{j_m,\dots, j_1}(x) P_{k_1,\dots, k_l}(x)  \right) \\
		&= \tau^{\otimes l+1} \left(\partial_{k_l,\dots,k_1}(P_{i_n,\dots,i_1} P_{j_m,\dots, j_1})(x)\right).
	\end{align*}
	Thus necessarily for this quantity not to be zero, one must assume that $l\leq n+m$. Besides, let us remember that for any $u_n,\dots,u_1\in [1,d]$, $P_{u_n,\dots,u_1}$ is orthogonal to $P_{\emptyset}$, and hence
	$$\tau\left(P_{u_n,\dots,u_1}(x)\right)=0.$$
	Consequently, 
	\begin{align*}
		&\tau\left( P_{i_n,\dots,i_1}(x) P_{j_m,\dots, j_1}(x) P_{k_l,\dots, k_1}^*(x)  \right) \\
		&= \sum_{\substack{r_1\in [0,n],r_2\in[0,m], \\ r_1+r_2=l}} \1_{(i_n,\dots,i_{n-r_1+1})=(k_l,\dots,k_{l-r_1+1})} \1_{(j_{r_2},\dots,j_1)=(k_{r_2},\dots,k_1)} \tau\left( P_{i_{n-r_1},\dots,i_1} P_{j_m,\dots, j_{r_2+1}}(x)\right) \\
		&= \sum_{\substack{r_1\in [0,n],r_2\in[0,m], \\ r_1+r_2=l}} \1_{ (k_l,\dots,k_1) = (i_n,\dots,i_{n-r_1+1},j_{r_2},\dots,j_1)} \tau\left( P_{i_{n-r_1},\dots,i_1} P_{j_{r_2+1},\dots, j_m}^*(x)\right) \\
		&= \sum_{\substack{r_1\in [0,n],r_2\in[0,m], \\ r_1+r_2=l}} \1_{ (k_l,\dots,k_1) = (i_n,\dots,i_{n-r_1+1},j_{r_2},\dots,j_1)} \1_{n-r_1 = m-r_2} \1_{(i_{n-r_1},\dots,i_1) = (j_{r_2+1},\dots, j_m)} \\
		&= \sum_{s= 0}^{n\wedge m} \sum_{\substack{r_1\in [0,n],r_2\in[0,m], \\ r_1+r_2=l}} \1_{ (k_l,\dots,k_1) = (i_n,\dots,i_{s+1},j_{m-s},\dots,j_1)}  \1_{s=n-r_1 = m-r_2} \1_{(i_{s},\dots,i_1) = (j_{m-s+1},\dots, j_m)}\\
		&= \1_{(i_{s_l},\dots,i_1) = (j_{m-s_l+1},\dots, j_m)} \1_{ (k_l,\dots,k_1) = (i_n,\dots,i_{s_l+1},j_{m-s_l},\dots,j_1)}  \1_{\frac{m+n-l}{2} \in [0,n\wedge m]\cap \N},
	\end{align*}

	\noindent where $s_l \deq \frac{m+n-l}{2}$. In the last line we used that the system of equation
	$$\left\{ \begin{array}{rl}
		s\ =& n-r_1 \\
		s\ =& m-r_2 \\
		l\ =& r_1+r_2
	\end{array}\right.$$
	has exactly one solution which is 
	$$ s = \frac{m+n-l}{2},\quad r_1= \frac{n+l-m}{2},\quad r_2= \frac{m+l-n}{2}. $$
	Besides, since we assumed that $l\leq n+m$, this solution is such that $r_1\in [0,n],r_2\in[0,m], s\in [0,n\wedge m]$ if and only if $\frac{m+n-l}{2} \in [0,n\wedge m]\cap \N$. Consequently, for $\tau\left( P_{i_n,\dots,i_1}(x) P_{j_m,\dots, j_1}(x) P_{k_l,\dots, k_1}^*(x)  \right)$ to be equal to one, there must exists an integer $s\in [0,n\wedge m]$ such that $(i_1,\dots,i_s) \allowbreak = (j_m,\dots,j_{m-s+1})$ and $(k_l,\dots,k_1) = (i_n,\dots,i_{s+1},j_{m-s},\dots,j_1)$. 
	
	Reciprocally if this condition is satisfied then since $l$ is the cardinal of the family $(k_l,\dots,k_1)$, one must have $l=n-s+m-s$, hence $s_l=s$ is an integer, $(i_1,\dots,i_{s_l}) \allowbreak = (j_m,\dots,j_{m-s_l+1})$ and $(k_l,\dots,k_1) = (i_n,\dots,i_{s_l+1},j_{m-s_l},\dots,j_1)$, hence $\tau\left( P_{i_n,\dots,i_1}(x) P_{j_m,\dots, j_1}(x) P_{k_l,\dots, k_1}^*(x)  \right)=1$.
	
	Consequently,	
	$$ P_{i_n,\dots,i_1}(x) P_{j_m,\dots, j_1}(x) = \sum_{s=0}^{m\wedge n} \1_{(i_s,\dots,i_1) = (j_m,\dots,j_{m-s+1})} P_{i_n,\dots,i_{s+1},j_{m-s},\dots,j_1}(x).$$
	Hence the conclusion.
\end{proof}

\begin{rem}
	Note that in particular, if $j_m\neq i_1$, then $P_{i_n,\dots,i_1} P_{j_m,\dots, j_1} = P_{i_n,\dots,i_1,j_m,\dots, j_1}$. Consequently, given $i_1,\dots,i_k$ such that for all $j\in [1,k-1]$, $i_j\neq i_{j+1}$, one has that
	$$ P_{i_1,\dots,i_1,i_2,\dots,i_2,i_3,\dots,i_{k-1},i_k,\dots,i_k}(X_1,\dots,X_d) = P_{d_1}(X_{i_1})\dots P_{d_k}(X_{i_k}),$$
	where $d_j$ is the number of times where $i_j$ is repeated and $P_n$ is the the $n$-th Chebyshev polynomials of the second kind re-scaled to be orthonormal with respect to the semicircle law with support on $[-2,2]$. Once again, even though this remark tells us a lot about the structure of our free Chebyshev polynomials, it will be of little use when manipulating them. Instead we will mostly use Proposition \ref{ksdncslmdlv}, \ref{sdincvsncd} as well as \ref{djkfnvslnc}.
\end{rem}

\section{The Haagerup inequality}

The proof of this section should be compared to the proof of the original Haagerup inequality in \cite{haagineq}. Interestingly, the proof of Lemma \ref{skjdvnsknc} and \ref{sldncslms} below are very similar to the one of Lemma 1.3 and 1.4 of \cite{haagineq} respectively. From this one can deduce that studying the $\CC^*$-algebra generated by a free semicircular system in the orthonormal basis provided by the free Chebyshev polynomials allows us to use tools and strategies developed for the reduced $\CC^*$-algebra generated by a free group.

Let us start by giving a few definitions.

\begin{defi}
	Given $f\in\C^*(x)$, $i_1,\dots,i_n\in [1,d]$, we will denote
	$$ f(i_n,\dots,i_1) = \tau(f\ (P_{i_n,\dots,i_1}(x))^*).$$
	Besides, one will say that $f$ is homogeneous of degree $n$ if $f$ is a linear combination of $(P_{i_n,\dots,i_1}(x))_{i_1,\dots,i_n\in [1,d]}$. This is equivalent to assuming that $f(i_m,\dots,i_1) = 0$ as soon as $m\neq n$.
\end{defi}

\begin{defi}
	We denote $P_n$ the orthogonal projection on the vector space generated by $(P_{i_n,\dots,i_1}(x))_{i_1,\dots,i_n\in [1,d]}$.
\end{defi}

The notion of an element being homogeneous of degree $n$ is the semicircular equivalent of assuming that an element of a reduced $\CC^*$-algebra generated by a free group is a linear combination of words in its generator of reduced length $n$. This notion will be important in the rest of the paper.

\begin{lemma}
	\label{skjdvnsknc}
	Given $l,m,n$ non-negative integers, $f$ and $g\in\C^*(x)$ homogeneous of degree $l$ and $m$ respectively, then if $l+m-n$ is even and $n\in [|l-m|,l+m]$,
	$$ \norm{P_n fg}_2 \leq \norm{f}_2\norm{g}_2,$$
	and otherwise 	$\norm{P_n fg}_2 =0$.	
\end{lemma}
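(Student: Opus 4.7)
The plan is to expand $f$ and $g$ in the orthonormal basis of free Chebyshev polynomials provided by Proposition \ref{ksdncslmdlv}, apply the product rule of Proposition \ref{djkfnvslnc}, and recognize the resulting coefficient of $P_n(fg)$ as a matrix product whose Hilbert--Schmidt norm is controlled by Cauchy--Schwarz. Writing $f = \sum_{\mathbf{i} \in [1,d]^l} f(\mathbf{i}) P_{\mathbf{i}}(x)$ and $g = \sum_{\mathbf{j} \in [1,d]^m} g(\mathbf{j}) P_{\mathbf{j}}(x)$, each surviving summand in $P_{\mathbf{i}} P_{\mathbf{j}}$ is a free Chebyshev polynomial of degree $l + m - 2s$, where $s \in [0, l \wedge m]$ is the overlap parameter from that proposition. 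Projecting onto degree $n$ forces $s = (l+m-n)/2$, so if $l+m-n$ is odd or $n \notin [|l-m|, l+m]$ there is no admissible integer $s$ and $P_n(fg) = 0$, which disposes of the vanishing case.

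In the remaining case I fix $s = (l+m-n)/2$ and split the multi-indices as $\mathbf{i} = (I', I'') \in [1,d]^{l-s} \times [1,d]^s$ and $\mathbf{j} = (J'', J') \in [1,d]^s \times [1,d]^{m-s}$. The matching condition $(i_1, \dots, i_s) = (j_m, \dots, j_{m-s+1})$ in Proposition \ref{djkfnvslnc} identifies $I''$ read in reverse with $J''$ read in order. To absorb this reversal cleanly I introduce, for $K \in [1,d]^s$, the scalar matrices
$$ A(I', K) \deq f(I', K^R), \qquad B(K, J') \deq g(K, J'), $$
where $K^R$ denotes the reverse of $K$. Because reversal is a bijection on $[1,d]^s$, one immediately has $\norm{A}_{\mathrm{HS}}^2 = \norm{f}_2^2$ and $\norm{B}_{\mathrm{HS}}^2 = \norm{g}_2^2$.

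With this encoding the merged multi-index in Proposition \ref{djkfnvslnc} is simply the concatenation $(I', J')$, independent of $K$, and the multiplication rule rewrites as
$$ P_n(fg) = \sum_{I', J'} \Big( \sum_{K \in [1,d]^s} A(I', K) B(K, J') \Big) P_{(I', J')}(x) = \sum_{I', J'} (AB)(I', J') \, P_{(I', J')}(x), $$
where $P_{(I', J')}$ denotes the free Chebyshev polynomial indexed by the concatenation of $I'$ and $J'$. By orthonormality (Proposition \ref{ksdncslmdlv}) this gives $\norm{P_n(fg)}_2^2 = \norm{AB}_{\mathrm{HS}}^2$, and sub-multiplicativity of the Hilbert--Schmidt norm --- one line of Cauchy--Schwarz applied row-by-row --- yields $\norm{AB}_{\mathrm{HS}} \leq \norm{A}_{\mathrm{HS}} \norm{B}_{\mathrm{HS}} = \norm{f}_2 \norm{g}_2$, which is precisely the claim.

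The only point requiring any real care is the bookkeeping of indices, in particular the reversal hidden in the matching condition of Proposition \ref{djkfnvslnc}; once it is folded into the definition of $A$, the lemma collapses to a standard Hilbert--Schmidt estimate and there is nothing else to verify.
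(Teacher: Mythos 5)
Your proof is correct and follows essentially the same route as the paper: both reduce the statement to the observation that the degree-$n$ coefficients of $fg$ are the entries of a matrix product over the contracted overlap index of length $s=(l+m-n)/2$, and then apply Cauchy--Schwarz in that index. The paper packages this by introducing auxiliary homogeneous elements $f'$, $g'$ whose coefficients are exactly the row and column $\ell^2$-norms of your matrices $A$ and $B$, so your Hilbert--Schmidt formulation is just a cleaner phrasing of the identical estimate.
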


\begin{proof}
	
	We have thanks to Proposition \ref{djkfnvslnc} that 
	$$ (fg)(k_n,\dots,k_1) = \sum_{\substack{ i_l,\dots,i_1,j_m,\dots,j_1\in[1,d] \\ \text{such that } \exists s\in[0,l\wedge m],\ (k_n,\dots,k_1)= (i_l,\dots,i_{s+1},j_{m-s},\dots,j_1) \\ \text{and } (i_1,\dots, i_s)=(j_{m},\dots,j_{m-s+1}) }} f(i_l,\dots,i_1) g(j_m,\dots,j_1). $$
	Note that if $i_l,\dots,i_1,j_m,\dots,j_1\in[1,d]$ are such that there exists $s\in[0,l\wedge m]$ such that $ (k_n,\dots,k_1)= (i_l,\dots,i_{s+1},j_{m-s},\dots,j_1)$, then necessarily $n=l+m-2s$. In particular, for $(fg)(k_n,\dots,k_1)$ to not be zero, $l+m-n$ must be even, and $n$ must belong to $[|l-m|,l+m]$. Hence $\norm{P_n fg}_2 =0$ if $n$ does not satisfy one of those conditions.
	
	Assume now that $n=l+m$, then $(fg)(k_n,\dots,k_1) = f(k_n,\dots,k_{m+1}) g(k_m,\dots,k_1)$, and
	$$ \norm{P_n fg}_2 = \sum_{ i_l,\dots,i_1,j_m,\dots,j_1\in[1,d]} |f(i_l,\dots,i_1)|^2 |g(j_m,\dots,j_1)|^2 = \norm{f}_2\norm{g}_2.$$
	Let us now assume that $n\in [|l-m|,l+m-2]$ and that $l+m-n$ is even. Let us denote $p=\frac{l+m-n}{2}$, we define 
	$$ f'(a_q,\dots,a_1) = \left(\sum_{ b_p,\dots,b_1\in [1,d] } |f(a_q,\dots,a_1,b_p,\dots,b_1)|^2\right)^{1/2} \text{ if } q=l-p \text{ and } f'(a_q,\dots,a_1)=0 \text{ else},$$
	$$ g'(a_q,\dots,a_1) = \left(\sum_{ b_p,\dots,b_1\in [1,d] } |g(b_p,\dots,b_1,a_q,\dots,a_1)|^2\right)^{1/2} \text{ if } q=m-p \text{ and } g'(a_q,\dots,a_1)=0 \text{ else.}$$
	Note that
	$$ \norm{f'}_2 = \sum_{ a_q,\dots,a_1 \in [1,d] } \sum_{ b_p,\dots,b_1\in [1,d] } |f(a_q,\dots,a_1,b_p,\dots,b_1)|^2 = \norm{f}_2, $$
	and similarly $\norm{g'}_2 = \norm{g}_2$. Thus
	\begin{align*}
		|(fg)(k_n,\dots,k_1)| &= \left|\sum_{\substack{ i_l,\dots,i_1,j_m,\dots,j_1\in[1,d] \\ \text{such that } (k_n,\dots,k_1)= (i_l,\dots,i_{p+1},j_{m-p},\dots,j_1) \\ \text{and } (i_1,\dots, i_p)=(j_{m},\dots,j_{m-p+1}) }} f(i_l,\dots,i_1) g(j_m,\dots,j_1) \right| \\
		&= \left|\sum_{b_p\dots,b_1\in[1,d]} f(k_n,\dots,k_{m-p+1},b_p\dots,b_1) g(b_1,\dots,b_p, k_{m-p},\dots,k_1) \right| \\
		&\leq |f'(k_n,\dots,k_{m-p+1})| \times |g'(k_{m-p},\dots,k_1)| \\
		&= |(f'g')(k_n,\dots,\dots,k_1)|,
	\end{align*}
	where the last equality follows because $f'$ and $g'$ are homogeneous of degree $l-p$ and $m-p$ respectively and that $n=l+m-2p$. For this same reason, one has that
	$$\norm{ P_n f'g' }_2 = \norm{ f'}_2\norm{g' }_2 = \norm{ f}_2\norm{g}_2.$$
	Hence,
	$$\norm{ P_n fg }_2 \leq \norm{ P_n f'g' }_2 = \norm{ f}_2\norm{g}_2.$$
\end{proof}

\begin{lemma}
	\label{sldncslms}
	Let $f\in \CC^*(x)$ be a polynomial in $x$, then 
	$$ \norm{f} \leq \sum_{n\geq 0} (n+1) \norm{P_nf}_2. $$
\end{lemma}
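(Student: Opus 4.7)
The plan is to follow the Haagerup strategy: first decompose $f$ into its homogeneous pieces $f_n = P_n f$ (a finite sum, since $f$ is a polynomial), then by the operator-norm triangle inequality
$$\|f\| \;\leq\; \sum_{n\geq 0} \|f_n\|,$$
it suffices to prove the homogeneous case $\|f_n\| \leq (n+1)\|f_n\|_2$. For that, I would use the characterization $\|f_n\| = \sup_{g\neq 0} \|f_n g\|_2 / \|g\|_2$ over $g \in L^2(x)$ and decompose $g = \sum_m g_m$ into its own homogeneous components.

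The core observation is the following rearrangement: for each fixed $n$ and $m$, Lemma~\ref{skjdvnsknc} tells us that $P_k(f_n g_m)$ vanishes unless $k = n+m-2s$ for some $s \in [0, n\wedge m]$, and for such $k$ it has $L^2$-norm at most $\|f_n\|_2 \|g_m\|_2$. The key is to group the product $f_n g$ according to the shift parameter $s$ rather than the output degree $k$: setting
$$T_s \;\deq\; \sum_{m \geq s} P_{n+m-2s}(f_n g_m), \qquad s = 0,1,\dots, n,$$
we have $f_n g = \sum_{s=0}^{n} T_s$. For fixed $s$, the summands $P_{n+m-2s}(f_n g_m)$ live in pairwise orthogonal homogeneous subspaces (of different degrees $n+m-2s$, which are distinct for distinct $m$), so they are mutually orthogonal in $L^2(x)$. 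Therefore
$$\|T_s\|_2^2 \;=\; \sum_{m\geq s} \|P_{n+m-2s}(f_n g_m)\|_2^2 \;\leq\; \sum_{m\geq s} \|f_n\|_2^2 \|g_m\|_2^2 \;\leq\; \|f_n\|_2^2 \|g\|_2^2.$$
Combining with the $L^2$ triangle inequality on $f_n g = \sum_{s=0}^{n} T_s$ gives $\|f_n g\|_2 \leq (n+1)\|f_n\|_2\|g\|_2$, hence $\|f_n\| \leq (n+1) \|f_n\|_2$, and summing over $n$ concludes.

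The main technical point (and the step that actually uses the structure developed for free Chebyshev polynomials) is the orthogonality of the summands inside each $T_s$, which requires that the components $P_k$ define a genuine orthogonal decomposition of $L^2(x)$; this is exactly Proposition~\ref{ksdncslmdlv}. Once that is available, the restriction given by Lemma~\ref{skjdvnsknc} on the admissible values of $k$ (only $k = n+m-2s$ for $0 \leq s \leq n\wedge m$) makes the grouping by $s$ natural and yields exactly the correct count of $n+1$ terms, matching the classical Haagerup bound. No further estimates are needed: the only loss is the triangle inequality on $s$, which is what produces the factor $n+1$.
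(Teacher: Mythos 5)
Your proof is correct, and it relies on the same ingredients as the paper's (reduction to $f$ homogeneous of degree $n$, the identification $\norm{f}=\sup_g \norm{fg}_2/\norm{g}_2$, the homogeneous decomposition $g=\sum_m g_m$, and Lemma \ref{skjdvnsknc}), but the final accounting is organized differently. The paper fixes the output degree $k$, bounds $\norm{P_k(fg)}_2$ by summing over the at most $n+1$ contributing input degrees $m$ via Cauchy--Schwarz (a factor $(n+1)^{1/2}$), and then sums the squares over $k$ by Parseval, where the double-counting of the pairs $(k,m)$ produces the second factor $(n+1)^{1/2}$. You instead fix the cancellation parameter $s$ and note that inside each class $T_s$ the contributions $P_{n+m-2s}(fg_m)$ land in homogeneous subspaces of pairwise distinct degrees, hence are orthogonal by Proposition \ref{ksdncslmdlv}, so each $T_s$ satisfies $\norm{T_s}_2\leq\norm{f}_2\norm{g}_2$ with no loss; the factor $n+1$ then comes from a single triangle inequality over the $n+1$ classes. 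This is essentially the transposed version of the same double sum, but it is arguably cleaner --- it isolates the loss in one step and is closer in spirit to Haagerup's original decomposition of the convolution operator according to the amount of cancellation. The only point you should make explicit is that for a general $g\in L^2(x)$ the identity $fg=\sum_{s=0}^n T_s$ and the convergence of each $T_s$ require interchanging the sums over $m$ and $s$; this is justified because each $T_s$ is an orthogonal series whose terms are dominated by $\norm{f}_2\norm{g_m}_2$ with $\sum_m\norm{g_m}_2^2<\infty$.
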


\begin{proof}
	
	Given that $(P_{i_n,\dots,i_1}(x))_{i_1,\dots,i_n\in [1,d], n\in [0,g]}$ is an orthonormal basis of the set of polynomials of degree smaller than $g$, one has
	$$ \norm{f} \leq \sum_{l\geq 0} \norm{ P_lf }. $$
	Thus one can assume that $f$ is homogeneous of degree $l$. Let $g$ be an arbitrary element of $L^2(x)$, let us set $g_m=P_mg$, then
	$$ \norm{g}^2 = \sum_{m\geq 0} \norm{g_m}_2^2.$$
	Consequently, Lemma \ref{skjdvnsknc} shows that if $l+m-n$ is even and $n\in [|l-m|,l+m]$, 
	$$ \norm{P_n fg_m}_2 \leq \norm{f}_2\norm{g_m}_2,$$
	and otherwise $\norm{P_n fg_m}_2=0$. Note that $n\in [|l-m|,l+m]$ if and only if $m\in [|l-n|,l+n]$, and that $l+m-n$ is even if and only if $l+n-m = l + m-n + 2(n-m)$ is even. Therefore,
	\begin{align*}
		\norm{P_n fg}_2 &\leq \sum_{m\geq 0}\norm{P_n fg_m}_2 \\
		&\leq \norm{f}_2 \sum_{\substack{m\in[|l-n|,l+n],\\l+n-m \text{ is even}}} \norm{g_m}_2 \\
		&= \norm{f}_2 \sum_{k=0}^{\min(l,n)} \norm{g_{l+n-2k}}_2 \\
		&\leq \norm{f}_2 \left(\sum_{k=0}^{\min(l,n)} \norm{g_{l+n-2k}}_2^2\right)^{1/2} \left(\min(l,n)+1\right)^{1/2} \\
		&\leq \norm{f}_2 \left(\sum_{k=0}^{\min(l,n)} \norm{g_{l+n-2k}}_2^2\right)^{1/2} \left(l+1\right)^{1/2}.
	\end{align*}
	Consequently, one has that
	\begin{align*}
		\norm{fg}_2^2 &= \sum_{n\geq 0} \norm{P_nfg}_2^2 \\
		&\leq \left(l+1\right) \norm{f}_2^2\ \sum_{n\geq 0} \sum_{k=0}^{\min(l,n)} \norm{g_{l+n-2k}}_2^2 \\
		&= \left(l+1\right) \norm{f}_2^2\ \sum_{k= 0}^l \sum_{n\geq k} \norm{g_{l+n-2k}}_2^2 \\
		&\leq \left(l+1\right) \norm{f}_2^2\ \sum_{k= 0}^l \sum_{n\geq 0} \norm{g_{n}}_2^2 \\
		&= \left(l+1\right)^2 \norm{f}_2^2 \norm{g}_2^2 \\
	\end{align*}
	Consequently,
	$$\norm{f} = \sup_{g\in L^2(x)} \frac{\norm{fg}_2}{\norm{g}_2} \leq \left(l+1\right) \norm{f}_2.$$
	Hence the conclusion.
	
\end{proof}

This allows us to prove our first theorem.

\begin{proof}[Proof of Theorem \ref{main1}]
	
	Let us first prove Equation \eqref{skjdvnslknvd}. We set $z_n=(P_0+\dots+P_n)z$. Thus thanks to Lemma \ref{sldncslms}, $(z_n)_{n\geq 0}$ is a Cauchy sequence in $\CC^*(x)$, hence converges towards a given $\widetilde{z}\in \CC^*(x)$. However for any $n$,
	$$ \norm{z-\widetilde{z}}_2 \leq \norm{z-z_n}_2 + \norm{z_n-\widetilde{z}}$$
	By definition, $\lim_{n\to\infty} \norm{z_n-\widetilde{z}} = 0$. And thanks to Parseval's identity, 
	$$ \norm{z-z_n}_2^2 = \sum_{k>n} \norm{P_kz}_2^2.$$
	In particular  $\lim_{n\to\infty} \norm{z-z_n}_2 = 0$. Consequently, $z=\widetilde{z}$ is an element of $\CC^*(x)$. Besides by using Lemma \ref{sldncslms} one more time,
	$$ \norm{z} = \norm{\widetilde{z}} = \lim_{n\to\infty} \norm{z_n} \leq \lim_{n\to\infty} \sum_{k=0}^{n} (k+1) \norm{P_kz}_2. $$
	Hence the conclusion. The proof of Equation \eqref{svlknsldn} is simply a consequence of the fact that $ (P_0+\dots+P_n)P(x) = P(x)$, hence $P_kP(x)= 0$ for $k>n$, hence by Cauchy-Schwarz and Parseval's identity,
	$$ \norm{P(x)} \leq \sum_{k= 0}^n (k+1) \norm{P_nP(x)}_2 \leq \sqrt{\sum_{k=0}^{n} (k+1)^2 \sum_{k=0}^{n} \norm{P_nP(x)}_2^2} = \sqrt{\frac{(n+1)(n+2)(2n+3)}{6}} \norm{P(x)}_2. $$
	Finally, if we set $R_n=P_{1,\dots,1}$ where $1$ is repeated $n$ times, then
	$$ R_{n+1}(2) = 2R_n(2) - R_{n-1}(2),$$
	and since one has that $R_0(2)=1$, and $R_1(2)=2$, by an immediate induction we get that $R_n(2)=n+1$. Hence if we set $Q_{2n} = R_n^2$, one has thanks to Proposition \ref{djkfnvslnc} that $ Q_{2n} = \sum_{k=0}^n R_{2k}$, thus $\norm{Q_{2n}(x)}_2 = \sqrt{n+1}$ and
	$$\frac{\norm{P_{2n}(x)}}{\sqrt{\frac{(2n+1)(2n+2)(4n+3)}{6}} \norm{P_{2n}(x)}_2} \geq \frac{(n+1)^{3/2}}{\sqrt{\frac{(2n+1)(2n+2)(4n+3)}{6}} } = \sqrt{\frac{3}{8}} \frac{n+1}{\sqrt{(n+\frac{1}{2})(n+\frac{3}{4})} }. $$
	We also set $P_{2n+1}= Q_nQ_{n+1}$, and with similar computations we get Equation \eqref{lskdnvclsnv}. 
	
\end{proof}

\section{The operator valued version}

If the proof of the previous section was to be compared to the original proof of the Haagerup inequality in \cite{haagineq}, then the one of this section should be compared with \cite{bush}. Once again, the orthonormal basis provided by the free Chebyshev polynomials allows us to use tools and strategies developed for the unitary case.

\begin{defi}
	Given $i_n,\dots,i_1\in [1,d]$, we define $U_{i_n,\dots,i_1}: L^2(x)\mapsto L^2(x)$ by setting
	$$ U_{i_n,\dots,i_1}\left(P_{k_m,\dots,k_1}(x)\right) \deq P_{i_n,\dots,i_1,k_m,\dots,k_1}(x). $$
\end{defi}

We immediately have the following lemma.

\begin{lemma}
	Given $i_n,\dots,i_1\in [1,d]$, then $U_{i_n,\dots,i_1}^*: L^2(x)\mapsto L^2(x)$ is defined by
	$$ U_{i_n,\dots,i_1}^*\left(P_{k_m,\dots,k_1}(x)\right) = \left\{  \begin{array}{cc} 0 & \text{ if } (k_m,\dots,k_{m-n+1})\neq (i_n,\dots,i_1), \\
		P_{k_{m-n},\dots,k_1} & \text{ else.}
	\end{array} \right. $$
	Besides $U_{i_n,\dots,i_1}$ is an isometry such that $ U_{i_n,\dots,i_1}^*U_{i_n,\dots,i_1}$ is the identity over $L^2(x)$ and  $ U_{i_n,\dots,i_1}U_{i_n,\dots,i_1}^*$ is the projection on the space generated by the following orthonormal basis,
	$$ \left\{  P_{k_m,\dots,k_1}(x)\ \middle|\ (k_m,\dots,k_{m-n+1})= (i_n,\dots,i_1) \right\}. $$
\end{lemma}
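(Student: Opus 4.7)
The plan is to verify the adjoint formula directly on the orthonormal basis $(P_{k_m,\dots,k_1}(x))$ provided by Proposition \ref{ksdncslmdlv}, and then read off the remaining three assertions as immediate consequences.

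First I would fix $i_n,\dots,i_1\in[1,d]$ and compute, for arbitrary basis elements $P_{j_l,\dots,j_1}(x)$ and $P_{k_m,\dots,k_1}(x)$,
$$\langle U_{i_n,\dots,i_1} P_{j_l,\dots,j_1}(x),\, P_{k_m,\dots,k_1}(x)\rangle = \langle P_{i_n,\dots,i_1,j_l,\dots,j_1}(x),\, P_{k_m,\dots,k_1}(x)\rangle,$$
which by Proposition \ref{ksdncslmdlv} equals $1$ precisely when $m=n+l$ together with $(k_m,\dots,k_{m-n+1})=(i_n,\dots,i_1)$ and $(k_{m-n},\dots,k_1)=(j_l,\dots,j_1)$, and vanishes otherwise. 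Pairing $P_{j_l,\dots,j_1}(x)$ against the claimed expression for $U_{i_n,\dots,i_1}^* P_{k_m,\dots,k_1}(x)$ gives the same conditions for non-vanishing and the same value $1$. By sesquilinearity and density of the basis in $L^2(x)$, this identifies the adjoint.

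The other three claims then follow by direct computation on the basis. For the isometry property, $U_{i_n,\dots,i_1}$ sends the orthonormal basis into itself (onto the subfamily whose leading $n$ indices are $(i_n,\dots,i_1)$), hence preserves norms. For $U_{i_n,\dots,i_1}^*U_{i_n,\dots,i_1}=\id$, apply the two formulas in sequence: $U_{i_n,\dots,i_1}$ prepends $(i_n,\dots,i_1)$ to the index string, and then $U_{i_n,\dots,i_1}^*$ recognises this prefix and strips it, returning the original basis vector. For $U_{i_n,\dots,i_1}U_{i_n,\dots,i_1}^*$, the formula for the adjoint shows that this composition annihilates any basis vector whose leading $n$ indices disagree with $(i_n,\dots,i_1)$ (including, vacuously, any basis vector of degree $m<n$), and fixes any basis vector whose leading $n$ indices match, since stripping those indices with $U^*$ and then re-prepending them with $U$ is the identity on the matching subfamily. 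This is precisely the orthogonal projection onto the span of that subfamily.

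There is no genuine obstacle here beyond the bookkeeping of prefixes; all three statements reduce to matching leading index tuples in $P_{k_m,\dots,k_1}$, and once Proposition \ref{ksdncslmdlv} is invoked the computations are purely combinatorial. The only subtlety worth flagging explicitly is the degenerate case $m<n$, where the tuple $(k_m,\dots,k_{m-n+1})$ cannot coincide with $(i_n,\dots,i_1)$, so $U_{i_n,\dots,i_1}^*$ kills all basis vectors of degree strictly less than $n$; this is consistent with the description of the range projection and with the adjoint formula.
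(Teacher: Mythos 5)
Your verification is correct and is exactly the argument the paper has in mind: it states this lemma as immediate, and the intended justification is precisely the pairing of $U_{i_n,\dots,i_1}P_{j_l,\dots,j_1}(x)$ against basis elements via the orthonormality of Proposition \ref{ksdncslmdlv}, from which the adjoint formula, the isometry property, and the two compositions follow by prefix bookkeeping. Your explicit handling of the degenerate case $m<n$ is a welcome detail the paper leaves implicit.
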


\begin{prop}
	\label{dnvls}
	Given $i_n,\dots,i_1\in [1,d]$, if one views $P_{i_n,\dots,i_1}(x)$ as an element of $B(L^2(x))$, one has that 
	$$ P_{i_n,\dots,i_1}(x) = \sum_{l=0}^n U_{i_n,\dots,i_{l+1}} U_{i_l,\dots,i_1}^*. $$
	
\end{prop}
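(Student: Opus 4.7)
My plan is to verify the identity as one between bounded operators on $L^2(x)$ by checking it pointwise on the orthonormal basis $(P_{k_m,\dots,k_1}(x))_{m\geq 0,\ k_j\in [1,d]}$ provided by Proposition \ref{ksdncslmdlv}. For an arbitrary such basis vector, applying the left-hand side just means forming the product $P_{i_n,\dots,i_1}(x)\cdot P_{k_m,\dots,k_1}(x)$, and Proposition \ref{djkfnvslnc} already expands this product in the Chebyshev basis as a sum of terms of the form $P_{i_n,\dots,i_{s+1},k_{m-s},\dots,k_1}(x)$ indexed by $s\in [0,n\wedge m]$, each weighted by an indicator comparing some of the $i$'s and $k$'s.

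For the right-hand side, each summand $U_{i_n,\dots,i_{l+1}} U_{i_l,\dots,i_1}^*$ is evaluated on a basis vector by using the preceding lemma: $U_{i_l,\dots,i_1}^*$ is nonzero only when the leading $l$ indices of $P_{k_m,\dots,k_1}$ match its own subscripts, in which case it returns the truncation $P_{k_{m-l},\dots,k_1}(x)$, at which point $U_{i_n,\dots,i_{l+1}}$ prepends the indices $i_n,\dots,i_{l+1}$ to produce $P_{i_n,\dots,i_{l+1},k_{m-l},\dots,k_1}(x)$. Summands with $l>m$ vanish automatically, so both sums effectively range over $l\in [0,n\wedge m]$.

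It then remains to identify the $l$-th summand on the right with the $s=l$ summand on the left. The resulting polynomial on both sides is $P_{i_n,\dots,i_{l+1},k_{m-l},\dots,k_1}(x)$, so this reduces entirely to matching the indicator conditions. This bookkeeping step, aligning the prefix-matching condition coming from $U^*$ with the matching condition of Proposition \ref{djkfnvslnc}, is the one that requires care and is really the only non-trivial part of the proof; once it is checked, the two operators agree on every element of the orthonormal basis, and hence coincide on all of $L^2(x)$ by linearity and continuity.
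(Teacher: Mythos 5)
Your overall strategy --- evaluate both sides on the orthonormal basis, expand the left-hand side with Proposition \ref{djkfnvslnc} and the right-hand side with the lemma describing $U^*$, then match summands term by term --- is exactly the approach of the paper, whose proof is a two-line reduction to Proposition \ref{djkfnvslnc}. The difficulty is that the one step you explicitly defer (``this reduces entirely to matching the indicator conditions\dots once it is checked'') is the only step with any content, and if you actually carry it out the indicators do \emph{not} match as the statement is written. The $s=l$ term of Proposition \ref{djkfnvslnc} carries the condition $(i_1,\dots,i_l)=(k_m,\dots,k_{m-l+1})$, i.e.\ $i_1=k_m,\ i_2=k_{m-1},\dots,\ i_l=k_{m-l+1}$, whereas $U_{i_l,\dots,i_1}^*P_{k_m,\dots,k_1}(x)$ is nonzero exactly when $(k_m,\dots,k_{m-l+1})=(i_l,\dots,i_1)$, i.e.\ $i_l=k_m,\dots,\ i_1=k_{m-l+1}$. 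These compare the same indices but in opposite orders, and they are not equivalent once $l\geq 2$. Concretely, take $d=2$, $n=m=2$, $(i_2,i_1)=(1,2)$ and $(k_2,k_1)=(2,1)$: then $P_{1,2}(x)P_{2,1}(x)=x_1x_2^2x_1=P_{1,2,2,1}(x)+P_{1,1}(x)+1$ has a nonzero component on $P_\emptyset$, while the $l=2$ term on the right is $U_{1,2}^*P_{2,1}(x)=0$ because $(2,1)\neq(1,2)$; conversely, with $(k_2,k_1)=(1,2)$ the right-hand side produces a spurious $P_\emptyset$ that the product $x_1x_2x_1x_2$ (whose trace vanishes) does not contain.

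What the term-by-term comparison actually proves is $P_{i_n,\dots,i_1}(x)=\sum_{l=0}^n U_{i_n,\dots,i_{l+1}}U_{i_1,\dots,i_l}^*$, with the subscript of the starred factor reversed. So the gap is not a formality: your argument stops precisely at the point where the statement itself needs to be corrected. A complete proof must write out the indicator comparison and record this reversal (or explain why it is immaterial downstream --- it is in fact harmless for Proposition \ref{sljkdv} and Theorem \ref{main2}, since relabelling the column index $J$ by its reversal changes none of the operator norms involved). Please carry out the bookkeeping explicitly rather than asserting it.
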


\begin{proof}
	It is enough to prove of this equality when evaluated in an element of the orthonormal basis. But then this is simply Proposition \ref{djkfnvslnc}.
\end{proof}

\begin{prop}
	\label{sljkdv}
	Given $a_{i_n,\dots,i_1}\in B(H)$, one has that	
	$$ \norm{\sum_{i_n,\dots,i_1\in [1,d]} a_{i_n,\dots,i_1}\otimes U_{i_n,\dots,i_{l+1}} U_{i_l,\dots,i_1}^*} = \norm{ \sum_{I\in [1,d]^{n-l}, J\in [1,d]^{l} } a_{(I,J)}\otimes E_{I,J}}, $$
	where $E_{I,J}$ is the matrix going from $H^{ d^l}$ to $H^{d^{n-l}}$ whose only non-zero coefficient is the one indexed by $(I,J)$ and is equal to $1$.
\end{prop}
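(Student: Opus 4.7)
The plan is to realise the operator $T := \sum_{i_n,\dots,i_1 \in [1,d]} a_{i_n,\dots,i_1} \otimes U_{i_n,\dots,i_{l+1}} U_{i_l,\dots,i_1}^*$, after rewriting the coefficients as $a_{(I,J)}$ with $I = (i_n,\dots,i_{l+1})$ and $J = (i_l, \dots, i_1)$, as a sandwich $T = \widetilde{W}_{n-l} \circ (A \otimes \id_{L^2(x)}) \circ \widetilde{W}_l^*$, where $A := \sum_{I,J} a_{(I,J)} \otimes E_{I,J}$ is exactly the matrix appearing on the right-hand side of the statement, and the outer factors are partial isometries built out of the $U_I$ and $U_J^*$. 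Once this factorisation is in place, the desired norm identity will follow at once, since sandwiching by (partial) isometries that are isometries on the relevant subspaces preserves operator norms, and $\|A \otimes \id_{L^2(x)}\| = \|A\|$.

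Concretely, I would define $\widetilde{W}_l^* : H \otimes L^2(x) \to H \otimes \C^{d^l} \otimes L^2(x)$ on basis vectors by $\widetilde{W}_l^*(h \otimes P_v(x)) := \sum_{|J| = l} h \otimes e_J \otimes U_J^* P_v(x)$. Using the uniqueness of the decomposition $v = (J, w)$ of a word of length at least $l$ as a length-$l$ prefix $J$ followed by a remainder $w$, together with the explicit formula for $U_J^*$ given in the preceding lemma, one checks that $\widetilde{W}_l^*$ vanishes on vectors $h \otimes P_v(x)$ with $|v| < l$ and restricts to a surjective isometry from the orthogonal complement of this subspace onto $H \otimes \C^{d^l} \otimes L^2(x)$. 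Symmetrically, $\widetilde{W}_{n-l}(h \otimes e_I \otimes u) := h \otimes U_I u$ defines an isometry $H \otimes \C^{d^{n-l}} \otimes L^2(x) \to H \otimes L^2(x)$, thanks to the orthogonality of the ranges of the $U_I$ for distinct $I$ of length $n-l$. A direct computation on a single basis vector $h \otimes P_v$ then confirms the claimed factorisation, immediately yielding $\|T\| \leq \|A\|$.

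For the matching lower bound, given $\varepsilon > 0$ I would pick a unit vector $v = \sum_J v_J \otimes e_J \in H \otimes \C^{d^l}$ with $\|Av\| \geq \|A\| - \varepsilon$ and set $\xi := \sum_J v_J \otimes P_J(x)$, which by orthonormality of the free Chebyshev basis (Proposition \ref{ksdncslmdlv}) is a unit vector of $H \otimes L^2(x)$. Using $U_{J'}^* P_J(x) = \1_{J = J'}\,\1$ for $|J| = |J'| = l$, a direct calculation gives $T\xi = \sum_I \bigl(\sum_J a_{(I,J)} v_J\bigr) \otimes P_I(x)$, whose norm is precisely $\|Av\|$, so $\|T\| \geq \|A\| - \varepsilon$; letting $\varepsilon \to 0$ completes the proof. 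The main obstacle I anticipate is not conceptual but rather notational: one must keep straight that $U_J$ prepends $J$ on the left while the subscripts of $P_{i_n,\dots,i_1}$ are written in decreasing order, and that $\widetilde{W}_l^*$ is only a partial, not a full, isometry on all of $H \otimes L^2(x)$. Beyond these bookkeeping matters no genuinely new input is needed.
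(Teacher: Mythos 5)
Your proof is correct, but it takes a genuinely different route from the paper. The paper stays entirely at the level of $C^*$-algebra norm identities: it computes $\norm{T}^2=\norm{TT^*}$, uses $U_J^*U_T=\1_{J=T}\id_{L^2(x)}$ to collapse the middle factors, and then performs a chain of equalities in which rank-one matrices $e_Re_J^*$ are inserted and moved across tensor legs until the expression becomes $\norm{\sum_{I,J}a_{(I,J)}\otimes E_{I,J}}^2$; the exact equality falls out of this chain all at once. You instead work spatially: you exhibit the explicit factorisation $T=\widetilde W_{n-l}\,(A\otimes\id_{L^2(x)})\,\widetilde W_l^{\,*}$ through the (co)isometries built from the $U_I$, which immediately gives $\norm{T}\leq\norm{A}$, and then you recover the reverse inequality by testing $T$ on vectors $\xi=\sum_J v_J\otimes P_J(x)$ supported on the degree-$l$ part of the Chebyshev basis, where $U_IU_J^*P_J(x)=P_I(x)$ turns $T\xi$ into a faithful copy of $Av$. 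Both arguments rest on the same two structural facts (orthogonality of the ranges of the $U_I$, i.e.\ $U_J^*U_T=\1_{J=T}\id$, and orthonormality of the $P_{i_n,\dots,i_1}(x)$), so neither is more general; your version is somewhat more transparent because each inequality has a visible witness (an isometric factorisation for the upper bound, a near-maximising vector for the lower bound), at the cost of having to check the partial-isometry bookkeeping you correctly flag, while the paper's version is shorter once one trusts the algebraic manipulations. Your verification that $\widetilde W_l^{\,*}$ kills $h\otimes P_v(x)$ for $|v|<l$ and is isometric on the complement, and that $\norm{T\xi}=\norm{Av}$, are exactly the points that need to be written out in full, and they all go through.
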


\begin{proof}
	One has that 
	\begin{align*}
		\norm{\sum_{i_n,\dots,i_1\in [1,d]} a_{i_n,\dots,i_1}\otimes U_{i_n,\dots,i_{l+1}} U_{i_l,\dots,i_1}^*}^2 &=\norm{\sum_{I\in [1,d]^{n-l}, J\in [1,d]^{l}} a_{(I,J)}\otimes U_I U_J^*}^2 \\
		&= \norm{\sum_{I,S\in [1,d]^{n-l}, J,T\in [1,d]^{l}} a_{(I,J)}a^*_{(S,T)}\otimes U_I U_J^* U_T U_S^*}.
	\end{align*}
	However, one has that
	$$ U_J^* U_T = \id_{L^2(x)} \1_{J=T}. $$
	Consequently, with $e_J$ the $J$-th element of $\C^{d^l}$, if we fix $R\in[1,d]^l$, using that for any $X$ in a $\C^*$-algebra, $\norm{X}=\norm{X\otimes e_Re_R^*}$, one has that 
	\begin{align*}
		\norm{\sum_{i_n,\dots,i_1\in [1,d]} a_{i_n,\dots,i_1}\otimes U_{i_n,\dots,i_{l+1}} U_{i_l,\dots,i_1}^*}^2
		&= \norm{\sum_{I,S\in [1,d]^{n-l}, J,T\in [1,d]^{l}} e_{J}^*e_T\times a_{(I,J)}a^*_{(S,T)}\otimes U_I U_J^* \otimes e_Re_R^*} \\
		&= \norm{\sum_{I,S\in [1,d]^{n-l}, J,T\in [1,d]^{l}} a_{(I,J)}a^*_{(S,T)}\otimes U_I U_J^* \otimes e_R e_{J}^*(e_R e_T^*)^*} \\
		&=\norm{\sum_{I\in [1,d]^{n-l}, J\in [1,d]^{l}} a_{(I,J)}\otimes U_I \otimes e_Re_J^*}^2 \\
		&= \norm{\sum_{I,S\in [1,d]^{n-l}, J,T\in [1,d]^{l}} a_{(I,J)}^*a_{(S,T)}\otimes U_I^* U_S \otimes e_{J}e_T^*} \\
		&= \norm{\sum_{I,S\in [1,d]^{n-l}, J,T\in [1,d]^{l}} a_{(I,J)}^*a_{(S,T)}\otimes (e_I^*e_S \id_{L^2(x)}) \otimes e_{J}e_T^*} \\
		&= \norm{\sum_{I,S\in [1,d]^{n-l}, J,T\in [1,d]^{l}} a_{(I,J)}^*a_{(S,T)}\otimes \id_{L^2(x)} \otimes e_{J}e_I^*e_Se_T^*} \\
		&= \norm{\sum_{I,S\in [1,d]^{n-l}, J,T\in [1,d]^{l}} a_{(I,J)}^*a_{(S,T)} \otimes E_{I,J}^*E_{S,T}} \\
		&=\norm{\sum_{I\in [1,d]^{n-l}, J\in [1,d]^{l}} a_{(I,J)}\otimes E_{I,J}}.
	\end{align*}
	Hence the conclusion. 
\end{proof}

This allows us to prove our second theorem.

\begin{proof}[Proof of Theorem \ref{main2}]
	
	One has thanks to Proposition \ref{dnvls} that
	\begin{align*}
		\norm{\sum_{i_n,\dots,i_1\in [1,d]} a_{i_n,\dots,i_1}\otimes P_{i_n,\dots,i_1}(x)} &=  \norm{ \sum_{l=0}^n  \sum_{i_n,\dots,i_1\in [1,d]} a_{i_n,\dots,i_1}\otimes U_{i_n,\dots,i_{l+1}} U_{i_l,\dots,i_1}^*} \\
		&\leq \sum_{l=0}^n \norm{ \sum_{i_n,\dots,i_1\in [1,d]} a_{i_n,\dots,i_1}\otimes U_{i_n,\dots,i_{l+1}} U_{i_l,\dots,i_1}^*}.
	\end{align*}
	Hence the conclusion thanks to Proposition \ref{sljkdv}.

\end{proof}

\bibliographystyle{abbrv}

\end{document}